\numberwithin{equation}{section}
\newtheorem{Theorem}{Theorem}[section]
\newtheorem{Lemma}[Theorem]{Lemma}
\newtheorem{Proposition}[Theorem]{Proposition}
\renewcommand{\leq}{\leqslant}
\renewcommand{\geq}{\geqslant}
\newcommand{\dd}{\textrm{d}}
\renewcommand{\div}{\operatorname{div}}
\title{Existence of contacts for the motion of a rigid body into a viscous incompressible fluid with the Tresca boundary conditions}    
\author[1]{Matthieu Hillairet}
\author[2]{Tak\'eo Takahashi}
\affil[1]{Institut Montpellierain Alexander Grothendieck, CNRS, Univ Montpellier}  \affil[2]{Universit\'e de Lorraine, CNRS, Inria, IECL, F-54000 Nancy, France}
\date{\today}
\begin{document}                  

\maketitle

\abstract{
We consider a fluid-structure interaction system composed by a rigid ball immersed into a viscous incompressible fluid. The motion of the structure satisfies the Newton laws and the fluid equations are the standard Navier-Stokes system. At the boundary of the fluid domain, we use the Tresca boundary conditions, that permit the fluid to slip tangentially on the boundary under some conditions on the stress tensor. More precisely, there is a threshold determining if the fluid can slip or not and 
there is a friction force acting on the part where the fluid can slip. 
Our main result is the existence of contact in finite time between the ball and the exterior boundary of the fluid for this system in the bidimensional case and in presence of gravity.
}

\vspace{1cm}

\noindent {\bf Keywords:} fluid-structure, Navier-Stokes system, Tresca's boundary conditions

\noindent {\bf 2010 Mathematics Subject Classification.}  74F10, 35R35, 35Q30, 76D05

\tableofcontents

%%%%%%%%%%%%%%%%%%%%%%%%%%%%%%%%%%%%%%%%%%%%%%%%%%%
%%%%%%%%%%%%%%%%%%%%%%%%%%%%%%%%%%%%%%%%%%%%%%%
%%%%%%%%%%%%%%%%%%%%%%%%%%%%%%%%%%%%%%%%%%%%%%%
\section{Introduction}
The system composed by a rigid body and a viscous incompressible fluid, assuming no-slip of the fluid on the solid boundaries, has been studied thoroughly from a mathematical point of view \cite{Serre, Judakov, MR1759801, MR1682663, MR1763528, MR1781915}. These results yield that we have similar well-posedness properties as for the fluid alone prior to a possible contact between the rigid body and the exterior boundary. 
One important issue is then to understand what happens at the time of the contacts if they exist. In \cite{MR1870954}, the authors show that in dimension 2, if there is a contact between solids, it occurs with null relative velocity and acceleration. 
% This result was extended in \cite{MR2019028} for the dimension 3 in space.
Then, in \cite{MR2354496, MR2481302} it is proved that for some particular geometry, in dimension 2 or 3 in space,  no contact occurs in finite time. For deformable structures, a similar result is proved in \cite{MR3466847}.
All these results are obtained again under the assumption that the fluid does not slip on the solid boundaries. 
One of the remedies to recover contacts is to take into account that, in presence of high shear on the boundaries, the fluid should be allowed to slip.  One classical model which includes this phenomenon is the Navier slip boundary conditions  \cite{Navier}. For these boundary conditions, the case of an immersed rigid body is studied in \cite{GVH, GVHW, Wang2014}. The authors obtain the well-posedness of the corresponding system up to contact and show the existence of contact in finite time between rigid bodies (in dimension 2 and 3). We mention also that the Cauchy theory for a model including slip on the moving body but no-slip
on the container boundary is studied in \cite{Sarka}.

One drawback of the Navier slip boundary conditions is that it forces the fluid to slip tangentially whatever the size of the shear on the boundaries. A more realistic model are the Tresca boundary conditions.  In these boundary conditions, the fluid sticks to the interface up to a shear-rate threshold that the fluid is prevented to exceed by allowing  slip on the interface. The boundaries of the fluid domain split then in a zone of small shear rates where Dirichlet boundary conditions are imposed and high shear rates where a type of Navier boundary conditions are imposed (but with an unknown slip length which encodes that the shear rate cannot exceed the threshold value). Discussing whether such models allow contacts or not is a delicate issue. Indeed, the intuitive idea would be to throw sufficiently fast the body toward the container boundary.  This would create high shear rate and induce slip on the solid boundaries which do not prevent from contact (again see \cite{GVHW}). 
Yet, though less singular than the Dirichlet boundary conditions, the Navier  boundary conditions also imply a kinetic-energy dissipation that forces the velocity of the moving body to vanish when contact occurs. Hence, one must be careful that this dissipation is not sufficiently fast to decrease the shear rate in a sufficiently large zone below the disk implying that no-slip boundary conditions appear preventing from collision occurence (see \cite{MR2354496}).  To discuss this issue, we focus on a simplified 2D symmetric configuration similar to \cite{GVHW,MR2354496,MR2481302}. We focus on the 2D case since the knowledge on the Dirichlet problem shows that contacts occur
in the 2D case with more difficulties than in the 3D case.  

%%%%%%
%%%%%
%%%%%

\medskip

%%%%%%
%%%%%
%%%%%

We describe now the system under consideration in this paper. 
We assume that the rigid body is a ball of radius 1 and that the container $\Omega$ is a rectangle:
\begin{equation*}%\label{geometry}
B_h = B((h+1) e_2,1), \quad \Omega= (-L,L) \times (0,L'),
\end{equation*}
with $(e_1,e_2)$ the canonical basis of $\mathbb R^2,$ and
with $h>0$ the distance between the rigid body and the container boundary (it will be a function of time in what follows) and $L>1$, $L'>2$ two constants.
% We recall the geometry in Figure \ref{F1}.
%%%%%%%%%%%%%%%%%%%%%%%%%%%%%%%%%%%%%%%
\begin{figure}%\label{F1}
\begin{center}
\begin{tikzpicture}
\draw (0,0) rectangle (8,6);
\draw[dashed] (4,-1) -- (4,7);
\draw (4,1.5) circle(1);
\draw (4,1.5) node {$\times$};
\draw (2,3) node {$\Omega_h$};
\draw[densely dotted,<->] (3.9,0) --  (3.9,0.5); 
\draw (3.9,0.25) node[left] {$h$};
\draw (0,0) node[below] {$-L$};
\draw (8,0) node[below] {$L$};
\draw (4,0) node[below right] {$0$};
\draw (4,6) node[above right] {$L'$};
\end{tikzpicture}
\end{center}
\caption{Geometry and notations}
\end{figure}
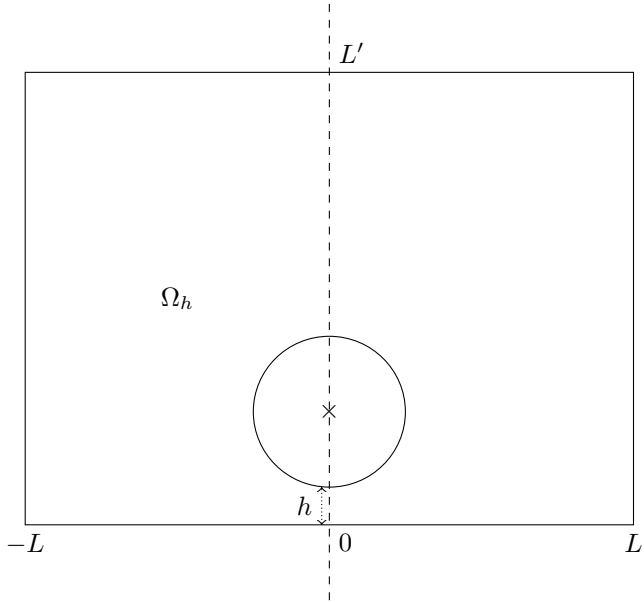
%%%%%%%%%%%%%%%%%%%%%%%%%%%%%%%%%%%%%
Corresponding to the position $B_h$ of the rigid body, we denote by 
$$
\Omega_h = \Omega \setminus \overline{B_h}
$$ 
the fluid domain. The equations that govern our fluid-solid system write
\begin{equation}\label{tre5.0}
\left\{
\begin{array}{rcl}
\rho_{\mathcal{F}}(\partial_t u + u \cdot \nabla u ) -\Delta u+\nabla p&=& 0 \quad \text{in}\  \Omega_h,\\
\div u&=&0 \quad \text{in}\  \Omega_h,\\
\end{array}
\right.
\end{equation}
\begin{equation}\label{tre5.1}
u\cdot n = \lambda e_2\cdot n \quad \text{on} \quad \partial B_h,
\end{equation}
\begin{equation}%\label{tre5.3}
\begin{cases}
u\cdot \tau -\lambda e_2\cdot \tau = 0 &\text{if} \quad |D(u)n\cdot \tau|<1,\\
\exists \beta \geq 0\quad u\cdot \tau -\lambda e_2\cdot \tau = -\beta D(u)n\cdot \tau & \text{if} \quad |D(u)n\cdot \tau|=1,
\end{cases}
\quad \text{on} \quad \partial B_h,
\end{equation}
\begin{equation}%\label{tre5.4}
u\cdot n = 0 \quad \text{on} \quad \partial \Omega,
\end{equation}
\begin{equation}\label{tre5.5}
\begin{cases}
u\cdot \tau = 0 &\text{if} \quad |D(u)n\cdot \tau|<1,\\
\exists \beta \geq 0\quad  u\cdot \tau = -\beta D(u)n\cdot \tau & \text{if} \quad |D(u)n\cdot \tau|=1,
\end{cases}
\quad \text{on} \quad \partial \Omega,
\end{equation}
\begin{equation}%\label{tre5.2bis}
\dot{h}=\lambda,
\end{equation}
\begin{equation}\label{tre5.2}
m\dot \lambda =-\int_{\partial B_h} \Sigma(u,p)n\cdot e_2\ d\gamma -m_a g,
\end{equation}
In the above equations $n$ and $\tau$ are the fluid exterior normal and associated tangential unitary vectors, 
$$
D(u):=\frac12 \left(\nabla u+(\nabla u)^\top\right), \quad \Sigma(u,p):=2D(u)-p I_2,
$$
$m$ is the mass of the rigid ball. We assume that the rigid body is homogeneous so that $m=\pi \rho_{\mathcal{S}}$ where $\rho_{\mathcal{S}}>0$ is the constant density of the structure. We also assume that the density of the fluid $\rho_{\mathcal{F}}>0$ is a positive constant.
The constant $m_a$ is equal to $m-\pi \rho_{\mathcal{F}}=\pi (\rho_{\mathcal{S}}-\rho_{\mathcal{F}})$ and we assume $\rho_{\mathcal{S}}>\rho_{\mathcal{F}}$ (so that the ball is falling). We should point out that, since we consider a symmetric configuration, we have that the ball does not rotate which allows us to remove the conservation of linear angular momentum. To simplify, we take the viscosity of the fluid constant and equal to 1. We have also fixed the shear threshold to be equal to 1.
Both simplifications are independent and do not restrict the generality. 
We complement the system with initial data:
\begin{equation}\label{ci}
h(0)=h^0, \quad \lambda(0)=\lambda^0, \quad u(0,\cdot)=u^0 \quad \text{in} \ \Omega_{h^0}.
\end{equation}
The existence of weak solutions is tackled in dimension 3 in space in \cite{BSMT17}. We explain now how to adapt this definition to our framework.
For this, we set first: 
\begin{equation}%\label{tre2.1}
w^*=w^*_h=\begin{cases}
e_2 & \text{on} \ \partial B_h\\
0 & \text{on} \ \partial \Omega.
\end{cases}
\end{equation}
Then the Tresca boundary conditions \eqref{tre5.1}--\eqref{tre5.5} write
\begin{equation}\label{tre2.2}
(u-\lambda w^*)\cdot n = 0 \quad \text{on} \quad \partial \Omega_h,
\end{equation}
\begin{equation}%\label{tre2.3}
\begin{cases}
(u-\lambda w^*)\cdot \tau = 0 &\text{if} \quad |D(u)n\cdot \tau|<1,\\
\exists \beta \geq 0\quad  (u-\lambda w^*)\cdot \tau  = -\beta D(u)n\cdot \tau & \text{if} \quad |D(u)n\cdot \tau|=1,
\end{cases}
\quad \text{on} \quad \partial \Omega_h.
\end{equation}
We recall that this Tresca boundary conditions admits the following variational formulation \cite{BSMT17}:
\begin{equation}\label{tre0.6}
\forall c\in \mathbb{R}, \quad (D(u)n\cdot \tau) c \geq |(u-\lambda w^*)\cdot \tau|- |(u-\lambda w^*)\cdot \tau+c| \quad \text{on} \quad \partial \Omega_h.
\end{equation}
%We set also
%$$
%\rho=\begin{cases}
%\rho_{\mathcal{F}} & \text{in} \ \Omega_h\\
%\rho_{\mathcal{S}} & \text{in} \ B_h
%\end{cases},
%\quad
%\rho^0=\begin{cases}
%\rho_{\mathcal{F}} & \text{in} \ \Omega_{h^0}\\
%\rho_{\mathcal{S}} & \text{in} \ B_{h^0}
%\end{cases},
%$$
%and 
We extend also $u$ by $\lambda w^*$ in $B_{h}$, so that $\div u =0$ in $\Omega$
and $u\cdot n = 0$ on $\partial \Omega$
(thanks to \eqref{tre2.2}). If $u \in L^2(\Omega),$ these properties are summarized by the statement $u \in L^2_{\sigma}(\Omega).$ 
With these conventions, we say that $(u,h)$ is a weak solution of \eqref{tre5.0}--\eqref{ci} on $(0,T)$ if
\begin{gather}
h\in W^{1,\infty}(0,T), \quad 0<h<L'-2,\quad \dot h=\lambda
\\
u\in L^\infty(0,T;L^2_\sigma(\Omega)), \quad u=\lambda e_2 \ \text{in} \ B_h, \quad u_{|\Omega_h} \in L^2(0,T;H^1(\Omega_h)),
\end{gather}
and if, for any $(w,\ell)$ satisfying 
\begin{equation}%\label{new0.0}
w \in C^1([0,T]; L^2_\sigma(\Omega)), \quad w=\ell e_2 \ \text{in} \  B_h, \quad 
\ell\in C^1([0,T]), \quad w_{|\Omega_h} \in L^\infty(0,T;H^1(\Omega_h)),
\end{equation} 
there holds:
%{\color{red}
\begin{multline}\label{tre6.1}
\dfrac{\dd}{\dd t} \left[\int_{\Omega_h} \rho_{\mathcal{F}} u \cdot w \ dx + m \lambda \ell \right] 
- \int_{\Omega_h} \rho_{\mathcal{F}} u \cdot  (\partial_t w + (u \cdot \nabla) w  ) \ dx-m \lambda \dot \ell
+ \int_{\Omega_h} 2 D(u) : D(w) \ dx 
\\
+ m_a g \ell
+ 2\int_{\partial \Omega_h} |(u-\lambda w^*)\cdot \tau|  -  |[u-w-(\lambda-\ell) w^*]\cdot \tau|  \ d\gamma \leq 0.
\end{multline}
%}
This last identity is obtained by multiplying \eqref{tre5.0} by $w,$ integrating by parts,
introducing \eqref{tre5.2} and reformulating boundary terms thanks to \eqref{tre0.6}. 
With similar arguments as in \cite{BSMT17}, one obtains  the following result for our system:
\begin{Theorem} \label{thm_old}
Assume  $(h^0,\lambda^0) \in (0,L'-2) \times \mathbb R$ and $u^0 \in L^2(\Omega_{h^0})$ satisfy:
\[
{\rm div} u^0 = 0, \qquad u^0 \cdot n  = \lambda^0 w^* \cdot  n \quad \text{on $\partial \Omega_{h^0}$}.
\] 
Then, there exist $T>0$ and a weak solution $(u,h)$ of \eqref{tre5.0}--\eqref{ci} on $(0,T)$
that satisfies:
\begin{multline}\label{nrj}
\sup_{(0,T)} \left[ \dfrac{1}{2} \int_{\Omega} \rho |u|^2 \ dx  + m_a g h\right] 
+ 2 \int_{0}^T \int_{\Omega_{h(t)}}  |D(u)|^2 \ dx\ dt
+2 \int_0^T \int_{\partial \Omega} |u -  \lambda w^*| \ d\gamma \ dt
 \\
 \leq 
\dfrac{1}{2}\int_{\Omega} \rho^0 |u^0|^2 \ dx + m_ag h^0.
\end{multline}
Furthermore we have the following alternative:
\begin{itemize}
\item $T= \infty$
\item $T<\infty$ and $\Big(\lim_{t \to T} h(t) =0$ or $\lim_{t \to T} h(t) =L'-2\Big)$
\end{itemize}
\end{Theorem}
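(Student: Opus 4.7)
The plan is to follow the construction of \cite{BSMT17}, simplified to exploit the 2D symmetric geometry. First I would regularise the non-differentiable convex functional $c\mapsto|c|$ appearing on $\partial\Omega_h$ by a smooth convex approximation $j_\varepsilon$, for example $j_\varepsilon(c)=\sqrt{c^2+\varepsilon^2}-\varepsilon$, so that the Tresca variational inequality \eqref{tre0.6} is replaced by a smooth nonlinear Navier-type boundary condition. For each $\varepsilon>0$ I would construct an approximate solution $(u_\varepsilon,h_\varepsilon,\lambda_\varepsilon)$ on a short time interval by a Schauder fixed-point argument: freeze the trajectory $h$, transport the fluid equation to a fixed domain via a regular family of diffeomorphisms depending on $h$, solve the resulting regularised Navier--Stokes--ODE system by a Galerkin method on a basis of $L^2_\sigma$-fields whose restriction to $B_h$ is a multiple of $e_2$, and then update $h$ by integrating $\dot h=\lambda$ with the solid equation.

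Uniform a priori estimates are obtained by testing the regularised equations with the pair $(u_\varepsilon,\lambda_\varepsilon)$ itself. The convexity of $j_\varepsilon$ together with $j_\varepsilon\le|\cdot|$ yields an energy inequality analogous to \eqref{nrj}, with the absolute value on $\partial\Omega$ weakened to $j_\varepsilon$. Since $m_agh_\varepsilon\ge 0$ as long as the ball stays in the container, this gives bounds for $u_\varepsilon$ in $L^\infty(0,T;L^2_\sigma(\Omega))\cap L^2(0,T;H^1(\Omega_{h_\varepsilon}))$ and for $h_\varepsilon$ in $W^{1,\infty}(0,T)$, uniform in $\varepsilon$ on any interval on which $h_\varepsilon$ stays away from $\{0,L'-2\}$; a Korn inequality on $\Omega_h$, with constant depending only on the bounds for $h$, is needed to pass from the $L^2$-control of $D(u_\varepsilon)$ to a full $H^1$-estimate.

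The third step is to pass to the limit $\varepsilon\to 0$. Arzel\`a--Ascoli gives $h_\varepsilon\to h$ uniformly on $[0,T]$ and hence uniform convergence of the geometries $\overline{B_{h_\varepsilon}}$; the Aubin--Lions lemma applied to the fixed-space extensions of $u_\varepsilon$ yields strong convergence in $L^2(0,T;L^2(\Omega))$, enough to treat the convective nonlinearity. The main obstacle is the non-smooth boundary functional: rewriting the regularised equation in the form \eqref{tre6.1} with $j_\varepsilon$ in place of $|\cdot|$, one uses $j_\varepsilon(a)-j_\varepsilon(b)\le|a-b|$ to compare the dissipative contribution with the actual Tresca functional, and then exploits the weak lower semicontinuity of the convex map $v\mapsto\int_{\partial\Omega_h(t)}|v\cdot\tau|\,d\gamma$ together with the uniform convergence of the geometry to take a $\liminf$ in time. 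Test functions $(w,\ell)$ adapted to the limit trajectory $h$ must be transported to each $\Omega_{h_\varepsilon}$ through the same family of diffeomorphisms, and the resulting commutator errors vanish with $\|h-h_\varepsilon\|_{L^\infty}$.

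Finally, the alternative is obtained by a maximality argument. Let $T^\ast$ be the supremum of times to which the weak solution extends while $h\in(0,L'-2)$. The uniform $W^{1,\infty}$ bound on $h$ ensures that $h(T^\ast-)$ exists, and the energy estimate guarantees that $u(T^\ast-,\cdot)\in L^2(\Omega_{h(T^\ast-)})$ together with the necessary trace compatibility. If $h(T^\ast-)\in(0,L'-2)$, one restarts the local existence from time $T^\ast$ with these initial data and obtains a strict extension, contradicting maximality; hence either $T^\ast=+\infty$ or $h(T^\ast-)\in\{0,L'-2\}$, which is the stated dichotomy. The main technical difficulty throughout is the interplay between the non-smooth Tresca term and the moving boundary: every compactness and continuity step has to be uniform with respect to the domain variation, which is what motivates the transport of test functions via a $C^1$ family of diffeomorphisms rather than working directly in the variable fluid domain.
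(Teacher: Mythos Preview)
The paper does not actually prove this theorem: it simply states that the result follows by adapting the arguments of \cite{BSMT17} to the present 2D symmetric setting and omits the proof for conciseness. Your proposal is a faithful sketch of precisely that construction (regularisation of the Tresca functional, Galerkin/fixed-point on a basis compatible with the rigid motion, uniform energy estimates, Aubin--Lions compactness and lower semicontinuity to pass to the limit, and a maximality/restart argument for the alternative), so it is entirely consistent with the approach the paper invokes.
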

We omit the proof for conciseness. The main objective of this paper is to prove that the second alternative can occur, meaning that the rigid ball can touch the exterior boundary in finite time.
To this end, we are going to assume that the initial velocity of the body vanishes.
We fix $u^0 \in {L^2(\Omega_{h^0})}$ and $\rho_{\mathcal{F}}$ and take $m$ large enough and $h^0$ small enough. 
Our main result reads then:
\begin{Theorem}\label{T02}
Given $\lambda^0 = 0$ and  $u^0\in L^2(\Omega_{h^0})$. 
For $h^0$ small enough and $mh^0$ large enough, there exist $T\in (0,\infty)$ and a weak solution $(u,h)$ such that 
$$
\lim_{t \to T} h(t) =0.
$$
\end{Theorem}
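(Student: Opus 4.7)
The plan is to argue by contradiction. Assume the weak solution extends without $h$ ever reaching $0$; by Theorem~\ref{thm_old} either $T=\infty$ or $h(t)\to L'-2$ in finite time. The energy estimate \eqref{nrj} with $\lambda^0=0$ gives $m_a g h(t) \leq E_0 := \tfrac{\rho_{\mathcal{F}}}{2}\|u^0\|_{L^2}^2 + m_a g h^0$, so the top-wall scenario is excluded for $h^0$ small enough relative to $L'-2$ and the initial kinetic budget. Hence I may assume a global solution with $h(t)\in(0,L'-2)$ and aim to derive a contradiction from gravity.

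The core step is to test \eqref{tre6.1} against a well-chosen translation lift. For each $h\in(0,L'-2)$ I would construct a divergence-free field $W_h\in H^1(\Omega_h)$ with $W_h=e_2$ on $\partial B_h$, $W_h=0$ on $\partial\Omega$, uniformly bounded in $L^2$, and with the 2D slip-type estimate
\[
\|D(W_h)\|_{L^2(\Omega_h)}^2 \leq C\bigl(1+|\log h|\bigr),
\]
obtained by matching a truncated Couette profile in the thin gap beneath the ball to a Bogovski\u{\i} extension outside, as in the Navier-slip framework~\cite{GVH,GVHW}. After a standard time regularisation, take $(w,\ell)=(\lambda(t)W_{h(t)},\lambda(t))$ as test. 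The crucial Tresca-specific feature is that with this choice $w=\lambda w^*$ and $\ell=\lambda$ on $\partial\Omega_h$, so the two integrands $|(u-\lambda w^*)\cdot\tau|$ and $|(u-w-(\lambda-\ell)w^*)\cdot\tau|$ are pointwise equal and the Tresca boundary term in \eqref{tre6.1} cancels identically, leaving a Navier-slip-type identity.

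Bounding the viscous pairing $2\int D(u):D(W_h)\,dx$ via Cauchy--Schwarz together with the logarithmic estimate, and the convective term via the 2D Ladyzhenskaya inequality together with \eqref{nrj}, I expect to derive an effective inequality of the form
\[
\bigl(m+M_A(h(t))\bigr)\lambda(t) + m_a g\, t \leq C\sqrt{t\,\bigl(1+|\log h_\star(t)|\bigr)\, E_0} + C\rho_{\mathcal{F}}\|u^0\|_{L^2}^2,
\]
where $M_A(h)=\rho_{\mathcal{F}}\|W_h\|_{L^2}^2$ is the bounded added mass and $h_\star(t)=\inf_{[0,t]}h$. When $h^0$ is small and $mh^0$ is large, the Archimedean potential $m_a g h^0 \sim mgh^0$ dominates the initial fluid kinetic budget $\rho_{\mathcal{F}}\|u^0\|_{L^2}^2$; then for a suitable time $t_0=O(1)$ the right-hand side is beaten by $m_a g t_0$, forcing $\lambda(t_0)\leq -c<0$. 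A continuation argument using $\dot h=\lambda$ and the same inequality (now with $h_\star=h(t)$ monotone) yields $h(T)=0$ at some finite $T$, contradicting the standing assumption.

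The main technical obstacle is twofold: first, the joint construction of $W_h$ with the logarithmic Dirichlet estimate compatible with the divergence-free constraint and the exact boundary matching; second, the control of $\partial_t W_{h(t)}=\dot h\,\partial_h W_h$ which enters the test through $\partial_t w$ in \eqref{tre6.1} and must be estimated with comparable logarithmic bounds to close the chain of inequalities. The identical cancellation of the Tresca boundary term is the feature that circumvents the delicate case analysis on where slip or stick is realised on $\partial\Omega_h$, and reduces the problem to a quantitative slip-type analysis.
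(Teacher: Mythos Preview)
Your proposal contains a genuine gap at its central point. You claim to build a divergence-free lift $W_h$ satisfying \emph{full Dirichlet data} ($W_h=e_2$ on $\partial B_h$, $W_h=0$ on $\partial\Omega$) together with the slip-type estimate $\|D(W_h)\|_{L^2(\Omega_h)}^2\le C(1+|\log h|)$. These two requirements are incompatible in 2D: a lift matching both tangential and normal components forces a cubic profile in the gap and yields $\|D(W_h)\|_{L^2}^2\sim h^{-3/2}$, exactly the Dirichlet singularity of \cite{MR2354496}. The logarithmic estimates in \cite{GVH,GVHW} are obtained for lifts that match only the \emph{normal} component on the solid boundaries; such lifts do \emph{not} satisfy $W_h=e_2$ on $\partial B_h$, hence $w=\lambda W_h$ does \emph{not} equal $\lambda w^*$ there, and your cancellation of the Tresca boundary term fails. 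In short, the very feature you rely on to kill the Tresca term is the one that destroys the logarithmic bound you need afterwards. A secondary issue is that the choice $\ell=\lambda(t)$ requires $\lambda\in C^1$ to be an admissible test, which is not available for weak solutions; the ``standard time regularisation'' you invoke does not obviously commute with the nonlinear absolute values in \eqref{tre6.1}.

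The paper proceeds quite differently. It uses a Navier-slip-type lift $w_h$ with $\ell=1$ (not $\ell=\lambda$) matching only the normal component on $\partial\Omega_h$; the tangential mismatch is tuned (via parameters in the stream function) so that $D(w_h)n\cdot\tau$ stays bounded on $\partial B_h$, while $\int_{\partial\Omega_h}|(w_h-w^*)\cdot\tau|\,d\gamma\le C|\ln h|$. The Tresca boundary term is therefore \emph{not} cancelled but estimated by this logarithm; after introducing an auxiliary pressure and a Korn inequality with constant uniform in $h$, one obtains the integrated differential inequality
\[
\dot h(t)\le -\tfrac{g}{2}t + C^\sharp g h^0 + \tfrac{C^*}{m}\int_0^t|\ln h(s)|\,ds.
\]
The nonlocal logarithmic integral prevents a direct ODE argument; the proof closes via a discrete induction on a geometric sequence of times $t_n$, using the energy bound $|\dot h|\le 2\sqrt{gh^0}$ to control $h$ between successive $t_n$ and the largeness of $m$ to absorb the accumulated $|\ln h|$ contributions. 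This interplay between the $|\ln h|$ term (coming from the Tresca boundary, not from $\|D(w_h)\|_{L^2}$) and the factor $1/m$ is precisely what your cancellation trick tried to bypass, but cannot.
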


The proof of \cref{T02} is based on analysing the properties of the solution inherited from Theorem \ref{thm_old}. To this end, we construct a particular family of test functions in the same spirit as in \cite{MR2354496, MR2481302, GVHW, Wang2014}. We use this family in our variational inequality and this leads us to a differential inequality on $h$ where appears a term of order $\ln(h)$ due to our boundary conditions. Using this differential inequality and taking $m$ large enough and $h^0$ small enough, we show \cref{T02}.

\medskip

The outline of the paper is as follows. In the next section, we present the most novel arguments of our analysis: an adapted Korn inequality and the treatment of the differential inequality 
leading to contact. In the last section, we introduce the weak formulation 
of our problem, discuss the construction of a particular family of test functions and we use these test functions in our weak formulation to deduce the expected differential inequality on $h.$ 

\section{Main steps in the proof of \cref{T02}}

We provide in this section two major steps in the proof of Theorem \ref{T02}. The first one consists of a Korn inequality. We recall that such inequalities are introduced to control the $L^p$-norm of a full gradient by the same $L^p$-norm of the symmetric part of this gradient. 
Such inequalities are classical but, in our case, we are specifically interested in the dependence of the constant appearing in this inequality on geometrical parameters (especially the distance $h$). 
So, we give here a detailed analysis of this point. Moreover, it turns out that we control a supplementary term  which helps a lot the analysis. The second part of this section is devoted to the final step in the proof of Theorem \ref{T02}. Thanks to a multiplier argument we obtain in 
next section a differential inequality 
for the distance $h.$ We show in this section that this differential inequality yields to finite-time contact. 

\subsection{An adapted Korn inequality}

In the whole subsection $h >0$ and we consider functional inequalities in the associated
domains $\Omega_h$ and $B_h.$ We restrict to values of $h$ lower than some fixed $h^0 \in (0,L'-2)$ since we want to consider possible contacts with the bottom boundary of 
$\Omega$ only.  Given $h \in (0,L'-2),$ we set:
$$
V^1(h):=\left\{u\in L^2(\Omega) \ ; \ u\in H^1(\Omega_h), \quad \div u =0 \quad \text{in} \ \Omega, \quad u\cdot n=0 \quad \text{on} \ \partial \Omega, \quad \nabla u=0 \quad \text{in} \ B_h\right\}.
$$
We note that, given $u\in V^1(h)$ there exists a unique $\lambda_u \in \mathbb{R}^2$ such that $u=\lambda_u$ in $B_h$. Since $u$ is globally divergence-free on $\Omega,$ we have also that 
\[
u\cdot n = \lambda_u\cdot n \quad \text{on} \ \partial B_h.
\]
We start with estimating $\lambda_u$: 

\begin{Lemma}\label{L02}
Assume $h \in (0,h^0)$ and $u \in V^1(\Omega_h)$. Then $\lambda=\lambda_u$ satisfies
\[
|\lambda|^2 \leq C_1 \int_{\Omega_h} |D(u)|^2 \ dx.
\]
for a constant $C_1$ depending only on $L,L',h^0.$ 
\end{Lemma}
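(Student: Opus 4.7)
The plan combines a divergence-theorem identity for $\lambda$ with one-dimensional Poincar\'e inequalities along slices of the rectangle $\Omega$, taking advantage of the rectangular geometry and the tangency condition $u\cdot n = 0$ on $\partial\Omega$ built into $V^1(h)$.

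First, extending $u$ by the constant $\lambda = \lambda_u$ on $B_h$ produces a divergence-free field on all of $\Omega$ with $u\cdot n = 0$ on $\partial \Omega$, since the normal trace $u\cdot n = \lambda\cdot n$ matches across $\partial B_h$. Testing the consequence $\int_\Omega u\cdot \nabla \varphi\,dx = 0$ against $\varphi(x)=x_j$ and splitting the integral between $\Omega_h$ and $B_h$ (where $u\equiv\lambda$ and $|B_h|=\pi$) yields
\[
\pi \lambda_j \;=\; -\int_{\Omega_h} u_j\,dx, \qquad j=1,2,
\]
and therefore $|\lambda|\le \pi^{-1}\sqrt{|\Omega|}\,\|u\|_{L^2(\Omega_h)}$. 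It thus suffices to prove a bound of the form $\|u\|_{L^2(\Omega_h)}\le C(L,L')\,\|D(u)\|_{L^2(\Omega_h)}$.

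Second, the tangency $u\cdot n = 0$ on $\partial \Omega$ forces $u_1 = 0$ on the lateral sides $x_1 = \pm L$ and $u_2 = 0$ on the top and bottom $x_2\in\{0,L'\}$. For each height $y\in(0,L')$, the horizontal slice $\{x_2=y\}\cap \Omega_h$ is either $(-L,L)$ or, if the slice meets $B_h$, a disjoint union $(-L,a(y))\cup(b(y),L)$; in both cases, every connected component has at least one endpoint on $\partial \Omega$ where $u_1$ vanishes. A one-sided Poincar\'e inequality on each component gives $\int |u_1|^2\,dx_1\le 4L^2\int|\partial_1 u_1|^2\,dx_1$ along the slice, whence after integration in $y$ and the pointwise bound $|\partial_1 u_1|=|D(u)_{11}|\le|D(u)|$,
\[
\|u_1\|_{L^2(\Omega_h)}\le 2L\,\|D(u)\|_{L^2(\Omega_h)}.
\]
The symmetric argument on vertical slices $\{x_1=\text{const}\}\cap\Omega_h$, using $u_2=0$ at $x_2\in\{0,L'\}$ and $\partial_2 u_2 = D(u)_{22}$, yields $\|u_2\|_{L^2(\Omega_h)}\le 2L'\,\|D(u)\|_{L^2(\Omega_h)}$. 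Combining these two bounds with the identity of the first step produces the required estimate.

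\textbf{Main obstacle.} The only delicate point is that slices may be cut into two sub-intervals by $B_h$, and that near the contact the domain $\Omega_h$ degenerates. Thanks to the rectangular geometry, however, each sub-interval still touches $\partial\Omega$ and therefore inherits a vanishing Dirichlet condition for the relevant component of $u$, so a one-sided Poincar\'e inequality applies with a constant depending only on $L$ or $L'$. Were the container not rectangular (say spherical), the slicing argument would break down, and one would need a genuine Korn inequality on $\Omega_h$ whose constant is uniform as $h\to 0$; that uniformity would then be the real difficulty.
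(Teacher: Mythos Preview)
Your proof is correct and close in spirit to the paper's, but organized differently. The paper applies the divergence theorem on the half-domain $\Omega_h^+=\{x\in\Omega_h:x_2>1+h\}$ to obtain the identity $\lambda_2=-\tfrac12\int_{(-L,-1)\cup(1,L)}u_2(x_1,1+h)\,dx_1$, and then bounds this line integral by $\int_{\Omega_h}|\partial_2 u_2|\,dx$ via the fundamental theorem of calculus together with $u_2(x_1,L')=0$ (and symmetrically for $\lambda_1$). You instead apply the divergence theorem on the whole rectangle (testing against $\varphi=x_j$) to get $\pi\lambda_j=-\int_{\Omega_h}u_j\,dx$, and then control $\|u\|_{L^2(\Omega_h)}$ by slice-wise Poincar\'e inequalities. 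Both routes rest on the same two facts: the diagonal entries $\partial_j u_j$ coincide with those of $D(u)$, and $u_j$ vanishes on the corresponding pair of sides of the rectangle. Your version has the modest bonus of producing the Poincar\'e bound $\|u\|_{L^2(\Omega_h)}\le C\|D(u)\|_{L^2(\Omega_h)}$ along the way; the paper derives that inequality separately, by exactly your slicing argument, just before stating its full Korn inequality.
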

\begin{proof}
We set 
$$
\Omega_h^+=\left\{x\in \Omega_h \ ; \ x_2>1+h\right\}.
$$
This is a locally Lipschitz domain as long as $0<h<L'-2$. Assume $u\in V^1(h)$, we write 
$$
0=\int_{\Omega_h^+} \div u \ dx = \int_{\partial \Omega_h^+} u\cdot n \ ds
=-\int_{\Gamma_1} u_2  \ ds+\lambda \cdot \int_{\Gamma_2}  n \ ds,
$$
where $\Gamma_1=\{(x_1,1+h) \ ; \ x_1 \in (-L,-1)\cup (1,L)\}$ and $\Gamma_2=\partial B_h\cap \partial \Omega_h^+$.
We deduce from the above relation that
$$
\lambda_2 = -\frac{1}{2} \int_{(-L,-1)\cup (1,L)} u_2(x_1,1+h)  \ dx_1.
$$
Using that for almost every $x_1\in (-L,-1)\cup (1,L)$, $u_2(x_1,\cdot)\in H^1((1+h,L'))$ and $u_2(x_1,L')=0$, we deduce that
$$
\left| \lambda_2 \right| \leq  \frac{1}{2} \int_{\Omega_h} \left|\partial_{2} u_2(x)\right| \ dx\leq C\|D(u)\|_{L^2(\Omega_h)}.
$$
We can proceed similarly for $\lambda_1$ by integrating the divergence-free condition on $\Omega_h^{r} = \{ x \in \Omega_h ; x_1 >0\}$
and we deduce the result.
\end{proof}

We can now prove the following result:
\begin{Lemma}%\label{LA2}
Assume $h \in (0,h^0)$ and $u \in V^1(h)$. Then there exists a constant $C$ depending only on $L,L',h^0$ such that
\[
\int_{\Omega_h} |\nabla u|^2 \ dx+ \int_{\partial B_h} |u|^2 \ ds\leq C \int_{\Omega_h} |D(u)|^2 \ dx.
\]
\end{Lemma}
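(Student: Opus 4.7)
The plan is to combine \cref{L02} with Korn's inequality on the fixed container $\Omega$, via the natural extension of $u$ by $\lambda$ on $B_h$.

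For the boundary term, I would use that $u \equiv \lambda$ on $B_h$ (because $\nabla u = 0$ there), so the trace of $u$ on $\partial B_h$ taken from the ball side equals the constant vector $\lambda$. Hence $\int_{\partial B_h} |u|^2\,ds = 2\pi|\lambda|^2$, which \cref{L02} bounds by $2\pi C_1 \int_{\Omega_h} |D(u)|^2\,dx$.

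For the gradient term, I would work with the extension of $u$ to all of $\Omega$ by $\lambda$ on $B_h$, so that $\nabla u \equiv 0$ on $B_h$ and therefore $\int_{\Omega_h} |\nabla u|^2\,dx = \int_\Omega |\nabla u|^2\,dx$ (and likewise for $D(u)$). I would then apply the Korn inequality on the fixed Lipschitz rectangle $\Omega$: for $v \in H^1(\Omega)$ with $v \cdot n = 0$ on $\partial \Omega$,
\[
\int_\Omega |\nabla v|^2\,dx \leq C_\Omega \int_\Omega |D(v)|^2\,dx,
\]
with $C_\Omega$ depending only on $L$ and $L'$. This inequality follows from Korn's second inequality together with a compactness/contradiction argument using the compact embedding $H^1 \hookrightarrow L^2$. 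The essential point is that the kernel of $D$ on the constraint space is trivial: any infinitesimal rigid motion $v(x) = a + \omega x^\perp$ satisfying $v\cdot n = 0$ on each of the four sides of the rectangle $\Omega$ must have $a = 0$ and $\omega = 0$, hence $v \equiv 0$. Combined with the estimate from \cref{L02} applied to $\lambda$, this would yield the announced inequality.

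The hard part will be justifying that the extension lies in $H^1(\Omega)$: this holds exactly when the tangential trace of $u|_{\Omega_h}$ on $\partial B_h$ agrees with $\lambda\cdot \tau$, i.e., under a no-slip condition. Since $V^1(h)$ permits tangential slip on $\partial B_h$, in general the extension carries a tangential discontinuity across $\partial B_h$ and lies only in $L^2(\Omega)$. To address this I would either add a lifting supported in a fixed neighborhood of $\partial B_h$ that absorbs the tangential jump while contributing a controlled amount to the $D$-norm (with that contribution itself reabsorbed via \cref{L02}), or prove the Korn inequality on $\Omega_h$ directly with a constant uniform in $h \in (0,h^0)$, exploiting that the $D$-kernel inside $V^1(h)$ remains trivial as $h$ varies.
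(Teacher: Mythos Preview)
There is a genuine gap in your treatment of the boundary term. The quantity $\int_{\partial B_h}|u|^2\,ds$ in the lemma is the trace of $u\in H^1(\Omega_h)$ taken from the \emph{fluid} side, not from the ball side. Since $V^1(h)$ only enforces continuity of the normal component across $\partial B_h$, the tangential part of this fluid trace is in general \emph{not} equal to $\lambda\cdot\tau$, and your identity $\int_{\partial B_h}|u|^2\,ds=2\pi|\lambda|^2$ fails. (If it held, the boundary term would be a trivial restatement of \cref{L02}; the whole point of this estimate, and the way it is used later in the paper, is that it controls the full fluid trace on $\partial B_h$.) Your argument for the gradient term has the same defect at its root: you correctly observe that the global extension is not in $H^1(\Omega)$, so the Korn inequality on $\Omega$ is not directly available, and the ``lifting'' or ``uniform Korn on $\Omega_h$'' routes you sketch are not carried out.

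The paper avoids both difficulties by a direct computation on $\Omega_h$. One writes
\[
2\int_{\Omega_h}|D(u)|^2\,dx=\int_{\Omega_h}|\nabla u|^2\,dx+\int_{\Omega_h}(\nabla u)^{\top}:\nabla u\,dx,
\]
integrates the second term by parts using $\div u=0$, and is left with the boundary integral $\int_{\partial\Omega_h}[(u\cdot\nabla)u]\cdot n\,ds$. On the flat sides of $\partial\Omega$ this vanishes because $u\cdot n=0$ and $n$ is constant. On $\partial B_h$ one passes to polar coordinates; the curvature of the circle produces exactly $\int_{\partial B_h}|u|^2\,ds$ plus a cross term $-2\int_0^{2\pi}u_\theta\,\partial_\theta u_r\,d\theta$. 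Since $u_r=\lambda\cdot e_r$ on $\partial B_h$, this cross term is bounded by $C|\lambda|^2+\tfrac12\int_{\partial B_h}|u|^2\,ds$, and \cref{L02} closes the estimate. Thus the fluid boundary term is not obtained from any trace identification with the rigid motion, but emerges from the geometry of $\partial B_h$ in the integration by parts.
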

\begin{proof}
Using a density argument, we can assume that $u \in C^{\infty}(\overline{\Omega}_h)$.
First, we have
$$
2 \int_{\Omega_h} |D(u)|^2 \ dx  = \int_{\Omega_h} (\nabla u + (\nabla u)^{\top} ):  \nabla u  \ dx.
$$
By integration by parts and using that $\div u=0$, we deduce
$$
2 \int_{\Omega_h} |D(u)|^2 \ dx  = \int_{\Omega_h} |\nabla u^2| \ dx 
+ \int_{\partial \Omega} [(u \cdot \nabla ) u ] \cdot n \ ds 
+ \int_{\partial B_h} [(u \cdot \nabla ) u ] \cdot n \ ds,
$$
where we recall that $n$ is the unit outer normal to $\Omega_h$.
Using that $u\cdot n=0$ on $\partial \Omega$ and that $\Omega$ is a rectangle, we deduce 
$$
\int_{\partial \Omega} [(u \cdot \nabla ) u ] \cdot n \ ds =0.
$$

\medskip

Finally, we compute the boundary integral on $\partial B_h.$
 For this, we introduce $(r,\theta)$ the cylindrical coordinates centered in the center of $B_h$ and $(e_r,e_{\theta})$ the associated local basis. We have then 
 $n = -e_r$ and, writing differential operators in terms of $(r,\theta)$ coordinates,
 we obtain: 
 $$
(u\cdot\nabla)u \cdot e_r = u_r \partial_r u_r + \frac{1}{r} u_\theta \partial_\theta u_r - \frac{1}{r} u_\theta^2
 $$
 and
 $$
\div u= \partial_{r} u_r + \dfrac{u_r}{r} + \dfrac{\partial_{\theta} u_{\theta}}{r} = 0. 
 $$
 This yields 
  \[
 \int_{\partial B_h} [(u \cdot \nabla ) u ] \cdot n \ ds
  = \int_{\partial B_h} |u|^2  \ ds-2  \int_{0}^{2\pi} u_\theta \partial_\theta u_r \ d\theta.
 \]
Here, we may apply that $u_{r} = - \lambda \cdot e_r $ on $\partial B_h$ so that
we can dominate:
\[
\left| 2  \int_{0}^{2\pi} u_\theta \partial_\theta u_r \ d\theta \right|
\leq C |\lambda|^2 + \dfrac{1}{2} \int_{\partial B_h} |u|^2  \ ds.
\]
Finally, we obtain that
\[
2 \int_{\Omega_h} |D(u)|^2 \ dx \geq \int_{\Omega_h} |\nabla u|^2 \ dx
+ \dfrac{1}{2}  \int_{\partial B_h} |u|^2 \ ds - C |\lambda|^2. 
\]
It remains to apply the previous lemma to control the $\lambda$ term.
This ends the proof.
 \end{proof}

Using Poincar\'e's inequality on $u_1$ and on $u_2$ and using that 
$u_1$ vanishes on the left/right boundaries (resp. $u_2$ vanishes on the top/bottom boundaries), we obtain that 
\[
\int_{\Omega_h} |u|^2 \ dx \leq C \int_{\Omega_h} |D(u)|^2 \ dx,
\]
for a constant $C$ depending only on $L,L'$. 
As a consequence, we deduce the following result:
\begin{Proposition}\label{propkorn}
Assume $h \in (0,h^0)$ and $u \in V^1(h)$, there exists a constant $C$ depending only on $L,L',h^0$ such that
\[
\int_{\Omega_h} |\nabla u|^2 \ dx+\int_{\Omega_h} |u|^2 \ dx+ \int_{\partial B_h} |u|^2 \ ds\leq C \int_{\Omega_h} |D(u)|^2 \ dx.
\]
\end{Proposition}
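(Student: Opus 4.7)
The proposition combines three bounds on $u$ controlled by $\int_{\Omega_h} |D(u)|^2\,dx$. Two of them, namely the full-gradient bound on $\Omega_h$ and the boundary-trace bound on $\partial B_h$, are already provided by the previous lemma. So the only remaining step is to establish
\[
\int_{\Omega_h} |u|^2 \, dx \leq C \int_{\Omega_h} |\nabla u|^2 \, dx,
\]
with a constant $C$ independent of $h$, and then to sum the two inequalities.

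For this $L^2$-bound I plan to use Poincar\'e's inequality componentwise along straight slices of $\Omega_h$. Since $u \cdot n = 0$ on the rectangular boundary $\partial \Omega$, the component $u_1$ vanishes on the lateral sides $\{x_1 = \pm L\}$, while $u_2$ vanishes on the horizontal sides $\{x_2 = 0\}$ and $\{x_2 = L'\}$. For a.e.\ $x_2 \in (0, L')$, the horizontal slice $\Omega_h \cap (\mathbb{R} \times \{x_2\})$ is either the full interval $(-L, L)$ or, when the line crosses $B_h$, a disjoint union of two subintervals of $(-L, L)$ whose outer endpoints are still $\pm L$. On each connected component of the slice, $u_1(\cdot, x_2)$ belongs to $H^1$ and vanishes at one endpoint, so a direct Cauchy-Schwarz argument applied to the fundamental theorem of calculus yields a one-dimensional Poincar\'e estimate with constant at most $(2L)^2$. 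Integrating in $x_2$ then gives $\int_{\Omega_h} |u_1|^2 \, dx \leq (2L)^2 \int_{\Omega_h} |\partial_1 u_1|^2 \, dx$, and the symmetric argument on vertical slices yields $\int_{\Omega_h} |u_2|^2 \, dx \leq (L')^2 \int_{\Omega_h} |\partial_2 u_2|^2 \, dx$.

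Summing these two estimates and using $|\partial_i u_j| \leq |\nabla u|$ produces the $L^2$-bound, and chaining with the previous lemma closes the proof. I do not anticipate any real obstacle: the only point to verify is that the Poincar\'e constants do not degenerate as $h \to 0$, which is automatic because every slice of $\Omega_h$ is contained in an interval of length at most $2L$ (respectively $L'$); the hole carved by $B_h$ only shortens these slices, which can only improve Poincar\'e's constant. The density argument used in the proof of the previous lemma applies equally well here to make the slicewise manipulations rigorous.
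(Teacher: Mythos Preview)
Your proposal is correct and follows essentially the same route as the paper: invoke the preceding lemma for the gradient and boundary-trace terms, and obtain the $L^2$-bound via a componentwise Poincar\'e inequality using that $u_1$ vanishes on the lateral sides and $u_2$ on the top/bottom of the rectangle. One small simplification you overlook is that $\partial_1 u_1$ and $\partial_2 u_2$ are already diagonal entries of $D(u)$, so the Poincar\'e step yields $\int_{\Omega_h}|u|^2\,dx \leq C\int_{\Omega_h}|D(u)|^2\,dx$ directly, without detouring through $|\nabla u|$ and the previous lemma.
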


To end up this section, we note that this proposition also implies another control of 
$\lambda$.  This is the content of the next lemma:
\begin{Lemma} \label{etcacontinueencoreetencore}
Assume $h\in (0,h_0)$ and $u \in V^1(h)$, with $u=\lambda e_2$ in $B_h$.Then 
\[
|\lambda | \leq C h^{1/4} \|D(u)\|_{L^2(\Omega_h)}.
\]
with a universal constant $C.$
\end{Lemma}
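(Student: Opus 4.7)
The plan is to exploit the thin gap between the bottom of the ball and the lower boundary of $\Omega$. I use the vertical coordinate as a Poincaré direction: along any vertical line $x_1 = \text{const}$ with $|x_1|$ small enough to lie under the ball, the component $u_2$ vanishes at $x_2=0$ (since $u\cdot n = 0$ on $\partial\Omega$) and at $x_2 = g(x_1) := 1+h -\sqrt{1-x_1^2}$ it reaches the trace of $u$ on $\partial B_h$. From $u\cdot n = \lambda e_2\cdot n$ on $\partial B_h$, writing $n = (-x_1,\sqrt{1-x_1^2})$ gives
\[
u_2(x_1, g(x_1)) = \lambda + \frac{x_1}{\sqrt{1-x_1^2}}\, u_1(x_1, g(x_1)).
\]
Integrating $\partial_2 u_2$ from $0$ up to $g(x_1)$ and rearranging, we obtain
\[
\lambda = \int_0^{g(x_1)} \partial_2 u_2(x_1, y)\,dy - \frac{x_1}{\sqrt{1-x_1^2}}\, u_1(x_1, g(x_1)).
\]

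Next, I would square this identity and apply Cauchy--Schwarz to the first term. Then I integrate over $|x_1|<\delta$ for a small $\delta \in (0,1/2)$ to be optimised. Using $g(x_1) \leq h + x_1^2 \leq h + \delta^2$ and the fact that the arclength on the lower arc of $\partial B_h$ is comparable to $dx_1$, this yields
\[
2\delta\, |\lambda|^2 \leq C(h+\delta^2)\int_{\Omega_h} |\partial_2 u_2|^2\,dx + C\delta^2 \int_{\partial B_h} |u|^2\,ds.
\]
Now I apply \cref{propkorn}: both $\|\nabla u\|_{L^2(\Omega_h)}^2$ and $\|u\|_{L^2(\partial B_h)}^2$ are controlled by $\|D(u)\|_{L^2(\Omega_h)}^2$. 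This gives
\[
|\lambda|^2 \leq \frac{C(h+\delta^2)}{\delta}\,\|D(u)\|_{L^2(\Omega_h)}^2.
\]

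The right-hand side is optimised by the choice $\delta = h^{1/2}$, which produces $|\lambda|^2 \leq C h^{1/2} \|D(u)\|^2_{L^2(\Omega_h)}$, i.e.\ the stated bound. The value $\delta \sim \sqrt{h}$ is natural: it is the horizontal scale on which the gap width $g(x_1)$ is still of order $h$.

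The main subtlety, and the reason the supplementary boundary term in \cref{propkorn} plays a crucial role, is that we do \emph{not} know $u = \lambda e_2$ on $\partial B_h$ from the fluid side; only the normal component is prescribed because the Tresca conditions allow slip. Hence the inversion of $u\cdot n = \lambda \cos\alpha$ produces the correction $\tfrac{x_1}{\sqrt{1-x_1^2}}u_1$, whose $L^2$-norm along the lower arc we would not be able to control by $\|D(u)\|_{L^2(\Omega_h)}$ alone by a standard Korn inequality. It is precisely the trace bound $\|u\|_{L^2(\partial B_h)}\leq C\|D(u)\|_{L^2(\Omega_h)}$ established in the previous subsection that closes the argument.
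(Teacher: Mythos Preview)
Your argument is correct and is essentially the detailed version of what the paper merely sketches: the paper refers to the proof of Lemma~6.4 in \cite{Wang2014} (inequality~(A.15)) with the modification that the trace on $\partial B_h$ replaces the trace on $\partial\Omega$, and then applies \cref{propkorn}. Your vertical Poincar\'e argument in the gap, the optimisation $\delta=\sqrt{h}$, and the explicit use of the boundary term $\|u\|_{L^2(\partial B_h)}\le C\|D(u)\|_{L^2(\Omega_h)}$ from \cref{propkorn} reproduce exactly that strategy.
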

For a proof, we refer to the arguments leading to Lemma 6.4 in \cite{Wang2014} (see inequality (A.15)). We note here that in the proof of \cite{Wang2014}, we can use the $L^2$-norm on $\partial B_h$ instead of the $L^2$-norm on $\partial \Omega$.
Then, we apply the above Korn inequality to conclude.

\subsection{Final step of the proof of \cref{T02}}
Following the assumption of \cref{T02}, we consider in this subsection that $\lambda^0=0$ and that $\rho_{\mathcal F}$ and $u_0 \in L^2(\mathcal F^0)$
are given. We consider then the weak solution $(u,h)$ provided by \cref{thm_old} for this initial data.   
This solution is defined on $(0,T)$ where $T$ is finite if contact occurs in time $T.$
We shall prove that, under the assumption that $h^0$ is sufficiently small and $mh^0$
is sufficiently large, then $T$ might not exceed some value $T_*.$

\medskip

The proof of \cref{T02} relies on two ingredients. 
The first one is a differential inequality derived in the next section. We summarize this result in the following lemma:
\begin{Lemma} \label{lem_ode}
There exist $C^{\sharp} >0$ and $C^* >0$  independent of $(m,h_0)$ such that as long as the distance function $h$ satisfies $h \leq 1,$ there holds: 
\begin{equation}\label{ode01b}
\dot h(t) \leq  - \dfrac{gt}{2}+ C^\sharp gh^0+ \frac{C^*}{m}\int_0^t |\ln(h(s))| \ d s\,.
\end{equation} 
\end{Lemma}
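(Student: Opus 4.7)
The strategy is to test the variational inequality \eqref{tre6.1} against a pair $(w,\ell)=(w_{h(t)},1)$, where for each value of $h$ the vector field $w_h:\Omega\to\mathbb{R}^2$ is divergence-free on $\Omega$, coincides with $e_2$ on $\overline{B_h}$, and vanishes on $\partial\Omega$. The construction of such a family, together with the key bound $\int_{\Omega_h}|D(w_h)|^2\,dx \leq C|\ln h|$ and analogous controls on $w_h$ itself and on $\partial_t w_h = \dot h\,\partial_h w_h$, is the content of the next section. With this choice one has $w=\ell w^*=w^*$ on $\partial\Omega_h$, so $u-w-(\lambda-\ell)w^*=u-\lambda w^*$ identically on $\partial\Omega_h$, and the Tresca boundary contribution in \eqref{tre6.1} drops out.

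Integrating the remaining inequality from $0$ to $t$ and invoking $\lambda^0=0$ and $\dot\ell\equiv 0$ gives
\begin{equation*}
m\lambda(t) + m_a g t \leq R(t),
\end{equation*}
where $R(t)$ is the sum of the endpoint kinetic contributions $\int\rho_{\mathcal F} u^0\cdot w_{h^0} - \int\rho_{\mathcal F} u(t)\cdot w_{h(t)}$, the convective term $\int_0^t\!\int\rho_{\mathcal F}\, u\cdot(u\cdot\nabla)w_h\,dx\,ds$, the time-derivative contribution $\int_0^t\!\int\rho_{\mathcal F}\, u\cdot\partial_t w_h\,dx\,ds$, and the viscous term $-2\int_0^t\!\int D(u):D(w_h)\,dx\,ds$. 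The energy inequality \eqref{nrj} supplies the a priori bound
\begin{equation*}
\sup_{[0,t]}\int_\Omega \rho|u|^2\,dx + \int_0^t\int_{\Omega_h}|D(u)|^2\,dx\,ds \leq C(1+m_a g h^0),
\end{equation*}
which, combined with the Korn-type inequalities of \cref{propkorn} and \cref{etcacontinueencoreetencore}, is the main tool for controlling each term in $R(t)$. The essential one is the viscous term: by Cauchy--Schwarz and the logarithmic $D(w_h)$ bound,
\begin{equation*}
\Bigl|2\int_0^t\!\int_{\Omega_h} D(u):D(w_h)\,dx\,ds\Bigr| \leq C(1+m_a g h^0) + C\int_0^t|\ln h(s)|\,ds,
\end{equation*}
and the remaining contributions produce either a constant (possibly depending on $\|u^0\|_{L^2}$, $\rho_{\mathcal F}$, and the geometric parameters $L,L'$, but not on $m$ or $h^0$), a term proportional to $m_a g h^0$, or further multiples of $\int_0^t|\ln h|\,ds$.

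Dividing through by $m$ and using $m\geq 2\pi\rho_{\mathcal F}$ (so that $m_a/m\geq 1/2$) together with $m_agh^0/m\leq gh^0$, one collects
\begin{equation*}
\dot h(t)=\lambda(t) \leq -\frac{gt}{2} + C^\sharp g h^0 + \frac{C^*}{m}\int_0^t|\ln h(s)|\,ds,
\end{equation*}
where any residual $O(1/m)$ constants are absorbed into $C^\sharp g h^0$ under the running hypothesis that $mh^0$ is large. The main obstacle will be the control of the convective term and of $\partial_t w_h$, since $\nabla w_h$ is singular as $h\to 0$ in the thin gap below the disk; the idea is to exploit the $O(h)$ width of that gap together with \cref{etcacontinueencoreetencore}, which converts factors of $\lambda=\dot h$ into powers of $h$, so that only the logarithmic $|\ln h|$ survives in the final inequality.
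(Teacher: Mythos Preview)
Your approach fails at its central step: the test function you postulate cannot exist with the bound you claim. If $w_h$ is divergence-free, equals $e_2$ on the whole of $\overline{B_h}$ (so in particular the \emph{tangential} component matches), and vanishes on $\partial\Omega$, then a standard lubrication computation in the gap $\mathcal G_h$ forces
\[
\int_{\Omega_h}|D(w_h)|^2\,dx \;\gtrsim\; h^{-3/2},
\]
not $|\ln h|$. Indeed, writing $w_h=\nabla^\perp\psi$, the four conditions $\psi(x_1,0)=\partial_2\psi(x_1,0)=0$, $\psi(x_1,H)=x_1$, $\partial_2\psi(x_1,H)=0$ make $\partial_{22}\psi$ of order $x_1/H^2$, and integrating $x_1^2/H^3$ over $|x_1|\lesssim 1$ gives $h^{-3/2}$. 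This is precisely the no-slip drag singularity responsible for the \emph{absence} of contact in the Dirichlet problem; it cannot produce an inequality compatible with finite-time contact. Consequently your Cauchy--Schwarz estimate of the viscous term, and the whole argument built on it, collapses.

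The paper circumvents this by taking a test function that matches only the \emph{normal} component $(w_h-w^*)\cdot n=0$ on $\partial B_h$ and is tuned so that $D(w_h)n\cdot\tau$ stays bounded there (this is the role of the parameters $\mu_1,\mu_2$ in $\phi_s$). The Tresca boundary term then does \emph{not} cancel; it is bounded via the triangle inequality by $\int_{\partial\Omega_h}|(w_h-w^*)\cdot\tau|\,d\gamma$, and it is \emph{this} boundary integral that contributes the $C|\ln h|$ (see \eqref{tre7.2}). Moreover, even for this slipping $w_h$ one has only $\|D(w_h)\|_{L^2}^2\lesssim h^{-1/2}$, still too singular for Cauchy--Schwarz: the paper instead integrates the viscous term by parts against an approximate pressure $q_h$ so that $\|\Delta w_h-\nabla q_h\|_{L^2(\Omega_h)}$ is bounded uniformly in $h$ (see \eqref{tre7.0}), and the leftover boundary contribution is absorbed into the drag primitive $\Phi(h)$ via \eqref{tre6.4}. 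Both of these ingredients --- the slip-type test function generating the logarithm on the boundary, and the pressure correction for the viscous term --- are missing from your outline and are not optional.
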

The proof of this lemma is postponed to the next section. The second ingredient of the proof is the energy inequality \eqref{nrj}. 
In case $\lambda^0=0$ the initial energy of the system reads:
\begin{equation}\label{tre4.2}
E^0:=\dfrac{1}{2}\int_{\Omega} \rho^0 |u^0|^2 \ dx + m_ag h^0=\dfrac{\rho_{\mathcal{F}}}{2}\int_{\Omega_{h^0}} |u^0|^2 \ dx + (m-\pi \rho_{\mathcal{F}}) g h^0.
\end{equation}
So, taking $mh^0$ sufficiently large, we have $E^0 \leq2 mgh^0$ 
so that energy estimate entails:
\begin{equation} \label{ingredient1b}
|\dot{h}(t)| \leq 2\sqrt{gh^0}, \qquad \forall \, t \in (0,T).
\end{equation}

From now on, we suppose \eqref{ingredient1b} and \eqref{ode01b}. 
We fix $\sigma \in (0,1/2)$ and we choose $h^0$ sufficiently small and $mh^0$
sufficiently large so that:
\begin{equation}
\label{ode17bis}
\left\{
\begin{aligned}
h^0 & < \max \left( \frac{2}{3(1+\sigma)}, \frac{1}{(32 C^\sharp)^2 g}\right), \\[4pt] 
m  & \geq \frac{8C^*}{g}\left\{ \left|\ln \frac{h^0}{2}\right|+3\sigma \left|\ln \left[(1-\sigma)\frac{h^0}{2}\right]\right| \right\}.
\end{aligned}
\right.
\end{equation}
We emphasize that, the two conditions are fixed sussessively. First $h^0$
is chosen to fulfill the first condition. This fixes the right-hand side of the second inequality  and we might choose a bigger $m$ (which amounts to fix $mh^0$ sufficiently large).  We will take $m$ even larger in what follows (depending on $h^0$ and $\sigma$). We introduce then the sequence of times:
\begin{equation} \label{ode11}
\left\{
\begin{aligned}
t_0       &= \dfrac{1}{4} \sqrt{\dfrac{h^0}{g}} , \\[8pt]
t_{n+1}&  = t_n + \sigma\dfrac{h(t_n)}{2\sqrt{gh^0}},
\end{aligned}
\right.
\end{equation}
and we show 
\begin{Lemma} \label{L01}
Given $\sigma \in (0,1/2)$,  $h^0$ sufficiently small and $mh^0$ sufficiently large, 
the times $t_n$ as computed by \eqref{ode11} are well defined
for all $n \in \mathbb N.$ Furthermore, for any $n \geq 0$ there holds:
\begin{equation}
\label{ode10}
 (1-\sigma)^n \dfrac{h^0}{2}\leq  h(t_n) \leq  \left(1-\frac{\sigma^2}{32}\right)^n \dfrac{3}{2}h^0.
\end{equation}
\end{Lemma}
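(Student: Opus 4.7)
The argument proceeds by induction on $n$. The base case $n=0$ follows directly from the energy estimate \eqref{ingredient1b}: integrating over $[0,t_0]$ with $t_0=\frac{1}{4}\sqrt{h^0/g}$ gives $|h(t_0)-h^0|\leq 2\sqrt{gh^0}\,t_0=h^0/2$, which is exactly \eqref{ode10} at $n=0$.

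For the induction step, assume \eqref{ode10} holds at index $n$; this makes $t_{n+1}$ well-defined since $h(t_n)>0$. The lower bound at $n+1$ is kinematic: \eqref{ingredient1b} yields $|h(t_{n+1})-h(t_n)|\leq 2\sqrt{gh^0}(t_{n+1}-t_n)=\sigma h(t_n)$, hence $h(t_{n+1})\geq(1-\sigma)h(t_n)\geq(1-\sigma)^{n+1}h^0/2$. For the upper bound, my plan is to prove the sharper one-step contraction $h(t_{n+1})\leq(1-\sigma^2/32)h(t_n)$, which combined with the induction hypothesis yields \eqref{ode10} at $n+1$. I integrate \eqref{ode01b} on $[t_n,t_{n+1}]$: because $t\geq t_n\geq t_0$, the drift term $-gt/2$ contributes at most $-\sigma h(t_n)/16$, while the constant perturbation $C^{\sharp}gh^0$ contributes at most $\sigma h(t_n)/64$ thanks to $h^0<1/[(32C^{\sharp})^2 g]$ from \eqref{ode17bis}; these produce a net $\leq-3\sigma h(t_n)/64$.

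The main obstacle is controlling the logarithmic remainder
\[
\frac{C^*}{m}\int_{t_n}^{t_{n+1}}\int_0^t|\ln h(s)|\,ds\,dt\leq\frac{C^*}{m}(t_{n+1}-t_n)\int_0^{t_{n+1}}|\ln h(s)|\,ds.
\]
I would bound the inner integral by partitioning $[0,t_{n+1}]$ along the times $t_k$: on $[t_k,t_{k+1}]$ the induction hypothesis and the energy estimate give $h(s)\geq(1-\sigma)h(t_k)\geq(1-\sigma)^{k+1}h^0/2$, so $|\ln h(s)|\leq(k+1)|\ln(1-\sigma)|+|\ln(h^0/2)|$ (using that the induction's upper bound together with $h^0<2/[3(1+\sigma)]$ keeps $h(s)<1$ throughout), while the length of $[t_k,t_{k+1}]$ is at most $(3\sigma/4)(1-\sigma^2/32)^k\sqrt{h^0/g}$. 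Since $\sum_{k\geq 0}(k+1)(1-\sigma^2/32)^k$ converges, this yields $\int_0^{t_{n+1}}|\ln h(s)|\,ds\leq C(\sigma,h^0)\sqrt{h^0/g}$ uniformly in $n$. Multiplying by $(t_{n+1}-t_n)=\sigma h(t_n)/(2\sqrt{gh^0})$ and $C^*/m$, the whole remainder is $\leq\sigma h(t_n)/32$ once $m$ is taken sufficiently large in terms of $\sigma$ and $h^0$, which is precisely the further enlargement of $m$ foreshadowed after \eqref{ode17bis}. Combining, $h(t_{n+1})-h(t_n)\leq-\sigma h(t_n)/64\leq-(\sigma^2/32)h(t_n)$ by $\sigma\leq1/2$, closing the induction. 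Finally, the geometric decay gives $\sum_k(t_{k+1}-t_k)\leq(24/\sigma)\sqrt{h^0/g}<\infty$, so the sequence has a finite accumulation point and every $t_n$ lies in the existence interval delivered by \cref{thm_old}.
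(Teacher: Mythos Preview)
Your proof is correct and follows essentially the same route as the paper: strong induction on $n$, with the lower bound obtained kinematically from \eqref{ingredient1b} and the upper bound by controlling the logarithmic integral in \eqref{ode01b} through the partition along the $t_k$ combined with the geometric decay furnished by the induction hypothesis. The only cosmetic differences are that the paper bounds $\dot h(t)$ pointwise on $[t_n,t_{n+1}]$ and then integrates (you integrate the inequality first and then bound each contribution), and it carries the condition $h(t)\in(0,1)$ on $[0,t_n]$ explicitly as part of the induction statement rather than verifying it in passing; note also that your partition argument tacitly uses the bounds at \emph{all} indices $k\le n$, so the induction is strong.
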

The proof of this lemma shall end the proof of \cref{T02}. Indeed, 
by relation \eqref{ode10} and definition \eqref{ode11}, we have that the sequence 
of time increments $(t_{n+1}- t_n)_{n\in \mathbb N}$ is dominated by a converging geometric sequence. In particular $t_n$ converges increasingly to a finite time $T_*$
with (applying again \eqref{ode10}) $\lim_{n\to \infty} h(t_n) = 0.$ Since $h$ is at least continuous we get $h(T_*) =0$ preventing from $T>T_*.$ 

\begin{proof}[Proof of Lemma \ref{L01}]
We recall that we assume at first that $h^0$ and $mh^0$ are chosen so that \eqref{ode17bis} holds true. We prove by induction that 
\begin{equation} \tag{$\mathcal P_n$} %\label{induction}
h(t) \in (0,1)\,, \quad \forall \,t \in  [0,t_n]\,, \qquad \text{$h(t_n)$ satisfies \eqref{ode10}}. 
\end{equation}
This entails the expected result.

\medskip

\noindent{\bf Case $n=0.$} By \eqref{ingredient1b} and the choice of $t_0,$ there holds:
\[
\frac{h^0}{2} \leq h(t) \leq \frac{3h^0}{2} \quad  \forall \, t\in [0,t_0]. %
\]
The restriction on $h^0$ in \eqref{ode17bis} implies then that $h(t) \in (0,1)$ for $t \in [0,t_0]$ and  $h^0/2 \leq h(t_0) \leq 3h^0/2.$

\medskip 

\noindent{\bf Induction.} 
Now, fix $n \in \mathbb N$ and assume that ($\mathcal P_k$) holds true for all $k \leq n$.
First, from \eqref{ingredient1b} and \eqref{ode11}, we have:
\[
(1-\sigma)h(t_n) \leq h(t) \leq (1+\sigma)h(t_n) \quad \forall \, t\in [t_n,t_{n+1}].
\]
In particular, there holds:
\begin{equation} \label{ode12}
 (1-\sigma)^{n+1} \dfrac{h^0}{2}  \leq h(t) \leq (1+\sigma)\dfrac{3}{2}h^0 \quad \forall \, t\in [t_n,t_{n+1}].
\end{equation}
By choice of $h^0$ we obtain that $h(t) \in (0,1)$ for $t \in [t_n,t_{n+1}]$ and thus on $[0,t_{n+1}]$ thanks to the induction assumption. We obtain also already the left-hand inequality in \eqref{ode12}:
\[
 (1-\sigma)^{n+1}\dfrac{h^0}{2} \leq  h(t_{n+1})
\]
All that remains concerns the right-hand inequality in \eqref{ode12}. 
For this, we note that $h(t) \leq 1$ on $[t_n,t_{n+1}]$ so that \eqref{ode01b} holds
true. We have then, for $t\in [t_n,t_{n+1}]$,
\begin{align*}
\dot h(t)  & \leq - \dfrac{g}{2} t + C^{\sharp}  gh^0 + \dfrac{C^*}{m} \int_0^t |\ln(h(s))|{\rm d}s  \\
& \leq - \dfrac{g}{2} t_0  +   C^{\sharp} gh^0   + \dfrac{C^*}{m} \int_0^{t_0} |\ln(h(s))|{\rm d}s 
+   \dfrac{C^*}{m} \sum_{k=0}^{n}  \int_{t_k}^{t_{k+1}} |\ln(h(s))|{\rm d}s.
\end{align*}
For the first three terms, we apply that $h(t) \in (h^0/2,1)$ on $(0,t_0),$ 
the definition of $t_0$ and the restrictions on  $h^0$ and $m$ to obtain  that:
\[
-\dfrac{g}{2}t_0 + C^\sharp  gh^0 + \dfrac{C^*}{m} \int_0^{t_0} |\ln(h(s))|{\rm d}s 
\leq
-\dfrac{1}{16} \sqrt{gh^0}+ \dfrac{C^*}{4m}\sqrt{\frac{h^0}{g}}  \left|\ln \frac{h^0}{2}\right|\leq -\dfrac{1}{32} \sqrt{gh^0}.
\]
This entails:
\[
\dot h(t)  \leq  -\dfrac{1}{32} \sqrt{gh^0} 
+   \dfrac{C^*}{m} \sum_{k=0}^{n}  \int_{t_k}^{t_{k+1}} |\ln(h(s))|{\rm d}s.
\]
Introducing again the definition of $(t_{k+1},t_k)$ and the bound below for $h(t)$ on $(t_{k+1},t_k)$
(similar to \eqref{ode12} on $(t_k,t_{k+1})$) we deduce that:
$$
\dot h(t)\leq  -\dfrac{1}{32} \sqrt{gh^0}
+   \dfrac{C^*}{m} \sum_{k=0}^{N}  \sigma\dfrac{h(t_k)}{2\sqrt{gh^0}} \left|\ln \left[(1-\sigma)^{k+1}\frac{h^0}{2}\right] \right|.
$$
Introducing the bound above taken from \eqref{ode10} for $h(t_k)$ in the remaining sum,  we deduce that:
$$
\dot h(t)\leq  -\dfrac{1}{32} \sqrt{gh^0}
+   \dfrac{3C^*\sigma}{4m} \sqrt{\frac{h^0}{g}}\sum_{k=0}^{N}  \left(1-\frac{\sigma^2}{32}\right)^k \left|\ln \left[(1-\sigma)^{k+1}\frac{h^0}{2}\right] \right|.
$$
Since $\sigma \in (0,1/2)$ and the series
$$
\left(\sum_{k=0}^{n}  \left(1-\frac{\sigma^2}{32}\right)^k \left|\ln \left[(1-\sigma)^{k+1}\frac{h^0}{2}\right] \right|\right)_n
$$
is convergent, we may increase the value of $m$ so such that:
\begin{equation}%\label{ode16}
-\dfrac{1}{32} \sqrt{gh^0}
+   \dfrac{3C^*\sigma}{4m} \sqrt{\frac{h^0}{g}}\sum_{k=0}^{n}  \left(1-\frac{\sigma^2}{32}\right)^k \left|\ln \left[(1-\sigma)^{k+1}\frac{h^0}{2}\right] \right|
\leq -\frac{\sigma}{16} \sqrt{gh^0}.
\end{equation}
Integrating this bound above for $\dot{h}(t)$ between $t_{n}$ and 
$t_{n+1}$ we conclude that:
$$
h(t_{n+1})-h(t_n)\leq -\frac{\sigma^2}{32} h(t_n)
$$
This ends up the proof.
\end{proof}

\section{Proof of Lemma \ref{lem_ode}}%\label{sec_de}
The proof of Lemma \ref{lem_ode} is obtained by chosing a suitable test-function
in the weak formulation of \eqref{tre5.0}--\eqref{ci}.  We exhibit now this test-function.  The construction is by now classical (see \cite{MR2354496,MR2481302} among other).  The main point is to define the test-function below the disk. So, given $h \leq 1,$ we set $\mathcal{G}_h$ the subdomain of $\Omega_h$ defined by
\[
\mathcal{G}_h := \left\{ x\in \mathbb{R}^2 \ ; \  |x_1| < \frac 14 \quad x_2 \in (0,H(x_1))\right\},
\]
where $H$ is a graph-parametrization of the bottom part of the disk boundary:
\begin{equation}\label{tre3.0}
H(x_1):=h+\gamma(x_1), \quad \gamma(x_1)=1-\sqrt{1-x_1^2}.
\end{equation}
We also define
\[
\mathcal{G}_h^{1/2} := \left\{ x\in \mathbb{R}^2 \ ; \  |x_1| < \frac 12 \quad x_2 \in (0,H(x_1))\right\}.
\]
We choose $\delta$ sufficiently small so that:
$$
\forall h \leq 1, \quad \left\{(x_1,x_2)\notin \mathcal{G}_h \ ; \  |(x_1,x_2) - (0,1+h)|\in [1+\delta,1+2\delta]\right\}  \subset \Omega.
$$
Then, in $\mathcal{G}_h^{1/2}$, we define
\begin{equation}\label{tre6.7}
a(h,x_1) = \mu_1 h + \mu_2 x_1^2,
\end{equation}
and
\[
\phi_s(x_1,x_2)= x_1 \left[ \left(1- a(h,x_1) \right) \frac{x_2}{H(h,x_1)} +  a(h,x_1) \left(\frac{x_2}{H(h,x_1)}\right)^3 \right],
\]
with $\mu_1=1/6,\mu_2=-3/2.$
This formula corresponds to the stream function of $w_h$ below the disk. We choose to keep abstract letter $\mu_1,\mu_2$
to emphasize from where these explicit values come from in computations. We also consider (with $\delta$ chosen above) $\phi : \mathbb{R}^2 \to \mathbb{R}$ such that
\[
\phi(x_1,x_2)=\left\{
\begin{array}{ll}
x_1 & \text{if} \ |(x_1,x_2)|<1+\delta,\\
0 & \text{if} \ |(x_1,x_2) |\geq 1+2\delta,
\end{array}
\right.
\]
and we set:
$$
{\phi}_{0}(x_1,x_2)=\phi((x_1,x_2) - (0,1+h)).
$$
Finally, our test-function reads in $\Omega_h:$
\begin{equation}\label{mt002}
w_h = \nabla^{\bot} \Psi  = \begin{bmatrix}
-\partial_2 \Psi \\ \
\partial_1 \Psi
\end{bmatrix} 
\quad \text{ where }
\Psi  = 
\left\{
\begin{array}{ll} 
\left(\chi \phi_{0}+(1-\chi){\phi}_0\right) & \text{ in $\mathcal G_{h}$}\\
\phi_{0} & \text{ in $\Omega_h \setminus \mathcal G_h$}
\end{array}
\right.
\end{equation}
where $\chi : \mathbb{R}^2 \to \mathbb{R}$ is a smooth function such that
\begin{equation}\label{mt001}
\chi(x_1,x_2)=\left\{
\begin{array}{ll}
1 & \text{in} \ \left(-\dfrac{1}{4},\dfrac{1}{4}\right)^2,\\
0 & \text{in} \ \mathbb{R}^2\setminus \left[-\dfrac{1}{2},\dfrac{1}{2}\right]^2.
\end{array}
\right.
\end{equation}
Concerning this test-function, we have the following proposition:
\begin{Proposition}\label{P1}
For any $h \leq 1 $, the test-function $w_h$ enjoys the properties: 
\begin{equation}\label{divnul}
 \div w_h =0 \quad \text{in} \ \Omega_h,
\quad 
(w_h-w^*)\cdot n =0 \quad \text{on} \ \partial \Omega_h,
\quad 
2D(w_h)n\cdot \tau  =0  \quad \text{on} \ \partial \Omega, \\
\end{equation}
Moreover, there exists a constant $C$ independent of $h$ such that:
\begin{equation}
\label{maispourquoionumerotelesreferences1?}
 \left\| w_h\right\|_{L^2(\mathcal G_h)} 
+ \left\| \int_0^{x_2} \partial_h w_{h,1} \right\|_{L^2(\mathcal G_h)} 
+ \left\|\int_{0}^{x_2} \nabla w_{h} \right\|_{L^{\infty}(\mathcal G_h)}  
 \leq C ,
\end{equation}
\begin{equation}
\label{tre7.1}
 \|2D(w_h)n \cdot \tau\|_{L^{\infty}(\partial B_h)} + \left\|\int_{0}^{H} \nabla w_{h} \right\|_{L^{\infty}(-1/2,1/2)}   \leq  C
\end{equation}
\begin{equation}
\label{tre7.2}
\dfrac{1}{|\ln(h)|} \int_{\partial \Omega_h}  |[w_h- w^*]\cdot \tau| \ d\gamma  \leq C,
\end{equation}
 \begin{multline}
 \label{tre8.1} h^{\frac 14}\left( \left\| \nabla w_h\right\|_{L^2(\mathcal G_h)}  +
 \|\partial_h w_{h,2}\|_{L^2(\mathcal G_h)} + \|w_h\|_{L^2(\partial B_h \cap \partial \mathcal G_h)} \right.
 \\\left. + \left[ \int_{-1/2}^{1/2}  \left(\int_{0}^{H} \partial_h w_{h,1} \ dz\right)^2 \ dx_1   \right]^{1/2}\right) \leq C,
\end{multline}
and there exists a pressure $q_h$ such that, 
with a constant $C$ independent of $h:$
\begin{equation}\label{tre7.0}
\left\| \Delta w_h - \nabla q_h \right\|_{L^2(\mathcal G_h)} \leq C,
\quad 
h^{\frac 12}\left| e_2 \cdot \int_{\partial B_h} \Sigma(w_h,q_h)n d \gamma \right| \leq C.
\end{equation}
Outside the gap, we have, with a constant $C$ independent of $h:$
\begin{equation}\label{mt003}
\|w_h\|_{L^2(\Omega\setminus \mathcal{G}_h)}
+\|\partial_h w_h\|_{L^2(\Omega\setminus \mathcal{G}_h)}
+\|\nabla w_h\|_{L^\infty(\Omega\setminus \mathcal{G}_h)}
+\left\| \Delta w_h\right\|_{L^2(\Omega\setminus \mathcal{G}_h)}
+\left\| \nabla q_h \right\|_{L^2(\Omega\setminus \mathcal{G}_h)}
 \leq C.
\end{equation}
\end{Proposition}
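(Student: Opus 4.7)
The plan is to verify each claim of \cref{P1} by direct computation from the explicit formulas for $\Psi$, splitting the analysis between the gap region $\mathcal G_h$, where lubrication scalings drive everything, and the complement $\Omega_h \setminus \mathcal G_h$, where the geometry is essentially independent of $h$. The structural identities \eqref{divnul} come for free from the stream-function representation $w_h = \nabla^\perp \Psi$: divergence-freeness is automatic, while the normal boundary condition reduces to checking that $\Psi$ is locally constant on each connected component of $\partial \Omega_h$. On $\partial B_h$ this is clear since $\phi_s(x_1, H(x_1)) = x_1$ by construction and $\phi_0 \equiv x_1$ in a neighbourhood of $\partial B_h$, so $\Psi \equiv x_1$ and $w_h = e_2 = w^*$; on $\partial \Omega$ the support condition on $\delta$ forces $\phi_0$ to vanish in a neighbourhood of the lateral and top boundaries, while $\phi_s(\cdot, 0) \equiv 0$ kills the contribution at the bottom of the gap. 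The free-stress condition $2D(w_h) n \cdot \tau = 0$ on $\partial \Omega$ translates into $\partial_1^2 \Psi - \partial_2^2 \Psi = 0$ along $x_2 = 0$, which follows from $\phi_s(\cdot, 0) \equiv 0$ (yielding $\partial_1^2 \phi_s|_{x_2 = 0} = 0$) together with the cubic-in-$x_2$ structure of $\phi_s$ (yielding $\partial_2^2 \phi_s|_{x_2 = 0} = 0$).

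For the gap estimates \eqref{maispourquoionumerotelesreferences1?}--\eqref{tre7.1} and \eqref{tre8.1} I would compute $\nabla \Psi$, $\partial_h \Psi$ and $\nabla^2 \Psi$ explicitly from the Poiseuille-type profile. The dominant velocity component reads $w_{h,1} = -x_1[(1-a)/H + 3a x_2^2/H^3]$ on the region where $\chi \equiv 1$, and all the required norms reduce to one-dimensional integrals of the form $I_k := \int_{-1/2}^{1/2} |x_1|^{2k}/H(x_1)^{2k-1} \ dx_1$ with $H(x_1) \sim h + x_1^2/2$. A standard case split $|x_1| \lessgtr \sqrt h$ (or the rescaling $x_1 = \sqrt h \, y$) bounds $I_1$ by a universal constant and $I_2$ by $C h^{-1/2}$, which accounts precisely for the weight $h^{1/4}$ in \eqref{tre8.1}. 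The identity $\int_0^{x_2} \partial_h w_{h,1} \ dz = -\partial_h \Psi$ (thanks to $\Psi(\cdot, 0) \equiv 0$) converts the middle term of \eqref{maispourquoionumerotelesreferences1?} into an estimate of the same type, and $\partial_h H = 1$, $\partial_h a = \mu_1$ make derivatives in $h$ generate integrals of the same shape. The transition region $\{\chi \in (0,1)\}$ contributes only lower-order terms because there $H$ is bounded below uniformly in $h$.

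The delicate part is the pressure estimate \eqref{tre7.0}. Here I would construct $q_h$ so that $\partial_1 q_h$ absorbs the leading part of $\Delta w_{h,1}$, namely $-\partial_2^3 \Psi = -6 a x_1/H^3$, and $\partial_2 q_h$ absorbs the corresponding leading part of $\Delta w_{h,2}$. An explicit primitive in $x_1$ of $-6 a(h, s) s/H(s)^3$, corrected by a lower-order $x_2$-dependent term, does the job. By direct algebra the residual $\Delta w_h - \nabla q_h$ then reduces to terms of one order better in $H^{-1}$, bounded in $L^2(\mathcal G_h)$ by the scalings of the previous paragraph. The specific values $\mu_1 = 1/6$, $\mu_2 = -3/2$ are chosen precisely so that the leading $h^{-3/2}$ contribution to the surface force $e_2 \cdot \int_{\partial B_h} \Sigma(w_h, q_h) n \ d\gamma$ vanishes after integration in $x_1$, leaving a net contribution of order $h^{-1/2}$ in agreement with the weight in \eqref{tre7.0}. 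This cancellation is the main obstacle of the proof and mirrors the construction performed in \cite{GVHW, Wang2014}.

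Finally, for the boundary integral \eqref{tre7.2} and the far-field estimates \eqref{mt003}: on $\partial B_h$ one has $w_h = w^*$ already, and outside the gap $w_h$ vanishes in a neighbourhood of $\partial \Omega$ by the support condition, so the only contribution to \eqref{tre7.2} comes from the bottom slice of the gap. There, $w_{h,1}(x_1, 0) = -x_1(1 - a(h, x_1))/H(x_1)$, and an explicit integration of $\int_{-1/2}^{1/2} |x_1|/H(x_1) \ dx_1$ yields the logarithmic bound $C |\ln h|$ required by \eqref{tre7.2}. The estimates \eqref{mt003} are immediate since $\Psi \equiv \phi_0$ there, a compactly supported smooth function of $(x_1, x_2 - 1 - h)$ whose derivatives in space, together with $\partial_h$ (which acts only through the translation), are uniformly bounded for $h \in (0, 1]$.
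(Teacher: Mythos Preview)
Your overall strategy---explicit computation from the stream function, lubrication scalings in $\mathcal G_h$, uniform estimates outside---matches the paper's, but two genuine errors run through the sketch.

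First, you overclaim on $\partial B_h$. From $\phi_s(x_1,H(x_1))=x_1$ you only get that $\Psi$ equals $x_1$ \emph{along} the boundary, not in a neighbourhood of it; hence only the tangential derivative of $\Psi$, i.e.\ the normal component of $w_h$, is prescribed. The tangential component $w_h\cdot\tau=\partial_n\Psi$ is \emph{not} equal to $e_2\cdot\tau$ on $\partial B_h$. Your later claim ``on $\partial B_h$ one has $w_h=w^*$ already'' is therefore false, and the boundary integral in \eqref{tre7.2} has a nontrivial contribution from $\partial B_h$ (of the same logarithmic order as the bottom contribution, so the final bound survives, but your argument as written does not cover it).

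Second---and more seriously---you misidentify the role of $\mu_1=1/6$, $\mu_2=-3/2$. In the paper these constants are fixed precisely so that the shear $2D(w_h)n\cdot\tau$ on $\partial B_h$ stays bounded uniformly in $h$: an explicit computation gives
\[
2D(w_h)n\cdot\tau=\frac{x_1 h(1-6\mu_1)-x_1^3(3/2+\mu_2)}{H^2}+O(1)\quad\text{on }\partial B_h\cap\partial\mathcal G_h,
\]
and only this choice kills the $O(H^{-1})$ divergence. This is exactly the content of the first term in \eqref{tre7.1}, which your sketch never addresses. The force bound in \eqref{tre7.0} is then obtained \emph{not} by a symmetry cancellation in a direct surface computation, but by the integration-by-parts identity
\[
e_2\cdot\!\int_{\partial B_h}\!\Sigma(w_h,q_h)n\,d\gamma
=\int_{\Omega_h}\! w_h\cdot(\Delta w_h-\nabla q_h)\,dx+2\!\int_{\Omega_h}\!|D(w_h)|^2\,dx
+\int_{\partial B_h}\!((w^*-w_h)\cdot\tau)(2D(w_h)n\cdot\tau)\,d\gamma,
\]
into which one feeds the already-proved bounds \eqref{maispourquoionumerotelesreferences1?}, \eqref{tre7.1}, \eqref{tre8.1} and the residual estimate on $\Delta w_h-\nabla q_h$. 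The $h^{-1/2}$ comes from $\|D(w_h)\|_{L^2(\mathcal G_h)}^2$, and the boundary term is harmless \emph{because} $2D(w_h)n\cdot\tau$ is bounded. Without the correct attribution of $\mu_1,\mu_2$ your argument for \eqref{tre7.1} has a hole, and your route to the force bound does not close.
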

\begin{proof}
The proof of this lemma is made of long and tedious computations. We recall that they
are based on the explicit formulas for $w_h.$ Beyond these explicit formulas, the main tools are a comparison between powers of $x_2$ and $x_1$ appearing in the numerators
with powers of $H$ appearing on the denominator. For this, we point out that $x_2 \leq H$ while $|x_1 | \leq \sqrt{H}$ in $\mathcal G_h.$ After reduction of formulas based on these comparisons, computing Sobolev norms reduces to the following estimates of integrals: 
\[
\int_{-1/4}^{1/4} \dfrac{|x_1|^{e}}{H(x_1)^{p}}{\rm d}x_1
\leq
\left\{
\begin{array}{ll}
|\ln(h)| & \text{ if $e=1$ and $p=1$} \\
h^{1-p} & \text{ if $e=1$ and $p \geq 2$} \\
h^{1/2-p} & \text{ if $e=0$ and $p \geq 1$} 
\end{array}
\right.
\]
We only provide the computation of the $L^{\infty}$-norm of $2D(w_h)n \cdot \tau$
on $\partial B_h$ to explain the choice of $\mu_1$ and $\mu_2.$ 
Explicit computations show that, on $\partial B_h \cap \partial \mathcal G^{1/2}_h,$
we have:
\[
2D(w)n \cdot \tau  = \dfrac{x_1h(1-6\mu_1)-x_1^3 (3/2 + \mu_2)}{H^2} + O(1).
\]
where $O$ corresponds to a bounded function in $x_1$ independently of $h$. 
We see here that, taking $\mu_1 = 1/6$ and $\mu_2 = -3/2,$ we compensate
the diverging terms and obtain that $2D(w_h)n \cdot \tau$ remains bounded independent of $h.$

\medskip

To conclude the proof, we provide the construction of the pressure $q_h$
which is slightly different from previous computations due to the form of the function $a$.
From the definition of $w_h$, we deduce
\[
\Delta w = \begin{bmatrix} - \partial_{112} \Psi - \partial_{222} \Psi \\
\partial_{111} \Psi + \partial_{122} \Psi  \end{bmatrix}
\quad \text{ in $\Omega_h.$}
\]
We then set ${q}_h = \chi \tilde{q}_h$ in $\mathcal G_h$ ($\chi$ is the truncation function above) that we extend by $0$
and where, for $x \in \mathcal G_h:$
\begin{multline}\label{tre114}
q_h(x)=\int_{-1/2}^{x_1} \dfrac{-6 s a}{H^3} {\rm d}s + \partial_{12} \Psi
	+2 \int_{-1/2}^{x_1} (1-a)  \left( \dfrac{2H'}{H^2} - s\left( 2 \dfrac{(H')^2}{H^3} - \dfrac{H''}{H^2} \right) \right) {\rm d}s \\
	-\frac{ 3x_2^2}{2}\left(  \dfrac{H{''}}{H^2} - 2 \frac{(H')^2}{H^3}\right)  
	+ x_1 \left(\dfrac{3H''H'}{H^3} - 3 \dfrac{(H')^3}{H^4} - \dfrac{H'''}{2 H^2} \right) x_2^2.
\end{multline}
To give the idea of
%explain in little detail 
such a choice, the first integral cancels $\partial_{222} \Psi$
in $\Delta w_{h,1}$ while the second one cancels $\partial_{122} \Psi.$ Doing so, 
we have left a term $-2 \partial_{112} \Psi$ in $\Delta w_{h,1} -\partial_1 q_h$ 
and a term $\partial_{122} \Psi$ in $\Delta w_{h,2} - \partial_2 q_h.$ Unfortunately,
diverging terms remain in these quantities that we compensate with the remaining explicit terms of the pressure.  Indeed, with this choice, we obtain that
\[
|\Delta w_h - \nabla q_h |\leq C\left( 1+\frac{|x_1|}{H} \right) \quad \text{ in $\mathcal G_h$}
\]
which entails the expected result.  Before ending the proof, we also mention that we choose the integrals in $q_h$ starting form $-1/2$ in order to avoid the introduction of diverging term when operating the truncation by $\chi.$ Due to the symmetries of the integrated functions, we could as well have chosen to start from $1/2.$

\medskip

To conclude, standard integration by parts using boundary conditions satisfied by $w_h$ entail that:
\begin{multline*}
e_2 \cdot \int_{\partial B_h} \Sigma(w_h,q_n) n d\gamma 
=  \int_{\partial \Omega_h} w_h\cdot  \Sigma(w_h,q_h)n \ d\gamma +  \int_{\partial \Omega_h} ((w^*-w_h)\cdot \tau)  (2D(w_h)n\cdot \tau) \ d\gamma
\\
=\int_{\Omega_h} w_h\cdot (\Delta w_h-\nabla q_h)  \ dx +2 \int_{\Omega_h} |D(w_h)|^2  \ dx 
+  \int_{\partial B_h} ((w^*-w_h)\cdot \tau)  (2D(w_h)n\cdot \tau) \ d\gamma.
\end{multline*}
Inroducing in this relation the previous results of the proposition we obtain 
the expected bound. This concludes the proof of the proposition.
\end{proof}

Now, we consider a weak solution  $(u,h)$ on $[0,T]$ and we assume that $0 < h \leq 1$. We take the test function $w_h$ obtained in \cref{P1} in the weak formulation \eqref{tre6.1}.  Noticing that we have $\ell=1$ in this case, integrating by parts and introducing the pressure, we obtain: 
\begin{multline}\label{tre6.3b}
\dfrac{\dd}{\dd t} \left[\int_{\Omega_h} \rho_{\mathcal{F}} u \cdot w \ dx + m \dot h \right] 
- \int_{\Omega_h} \rho_{\mathcal{F}} u \cdot  (\partial_t w + (u \cdot \nabla) w  ) \ dx
+ \int_{\Omega_h} u\cdot (-\Delta w + \nabla q) \ dx
\\
+2 \int_{\partial \Omega_h} (u-\lambda w^*)\cdot \tau \  (D(w)n\cdot \tau) \ d\gamma
+\dot h e_2\cdot  \int_{\partial B_h}  \Sigma(w,q)n \ d\gamma
\\
+m_a g
\leq  2\int_{\partial \Omega_h}  |[u+w-(\lambda+1) w^*]\cdot \tau|- |(u-\lambda w^*)\cdot \tau| \ d\gamma.
\end{multline}
We introduce now $\Phi \in C^1((0,1])$ such that
\begin{equation}\label{tre6.4}
\Phi'(h)= e_2\cdot  \int_{\partial B_h}  \Sigma(w_h,q_h)n \ d\gamma.
\end{equation}
and we rewrite \eqref{tre6.3b}:
\begin{multline}\label{tre6.6b}
 \dfrac{\dd}{\dd t} \left[\int_{\Omega_h} \rho_{\mathcal{F}} u \cdot w \ dx
 +m\dot h+\Phi(h)+m_a g t
 \right] - \int_{\Omega_h} \rho_{\mathcal{F}} u \cdot  (\partial_t w + u \cdot \nabla w  ) \ dx
+ \int_{\Omega_h} u\cdot (-\Delta w + \nabla q) \ dx
\\
+2 \int_{\partial \Omega_h} (u-\lambda w^*)\cdot \tau \  (D(w)n\cdot \tau) \ d\gamma
\leq 2 \int_{\partial \Omega_h}  |[w- w^*]\cdot \tau| \ d\gamma.
\end{multline}
For $t$ arbitrary in $(0,T)$, we integrate this last relation and
use the explicit time-dependency of $(w_h,q_h)$ with $\lambda^0 = 0$ to obtain:
\begin{align}
m\dot h(t)+\Phi(h(t))+m_a g t
&  \leq  \left[\int_{\Omega_{h(s)}} \rho_{\mathcal{F}} u(s,x) \cdot w_{h(s)}(x) \ dx\right]_{s=t}^{s=0}
 +\Phi(h^0)  \notag \\
 & + \int_0^t \int_{\Omega_h} \rho_{\mathcal{F}} u \cdot  (\dot{h} \partial_ h w_{h} + u \cdot \nabla w_{h}  ) \ dx \ ds  \notag \\
& + \int_0^t  \int_{\Omega_h} u\cdot (\Delta w_h - \nabla q_h) \ dx \ ds \notag \\
& - 2 \int_0^t  \int_{\partial \Omega_h} (u-\lambda w^*)\cdot \tau \  (D(w_h)n\cdot \tau) \ d\gamma \ ds \notag \\
& + 2 \int_0^t \int_{\partial \Omega_h}  |[w_h- w^*]\cdot \tau| \ d\gamma \ ds. \label{new1.0}
\end{align}
We proceed by estimating the different terms on the right-hand side of this inequality.
Below, we denote by $C_{w}$ any constant depending on our chosen test-functions and on the geometry but independent on $h$.
We extract the dependencies on the other parameters explicitly and thus provide the extensive computations below to conclude the proof.

\medskip

From \eqref{nrj}, \eqref{maispourquoionumerotelesreferences1?} and \eqref{mt003}, we have
\begin{equation}\label{term1}
 \left[\int_{\Omega_h} \rho_{\mathcal{F}} u \cdot w_h \ dx \right]_{s=t}^{s=0} \leq C_{w} \sqrt{\rho_{\mathcal{F}} E^0},
\end{equation}
where we recall that $E^0$ is defined by \eqref{tre4.2}.
To compute the second term, we decompose:
\begin{equation}\label{term2split}
\int_0^t \int_{\Omega_h} \rho_{\mathcal{F}} u \cdot  (\dot{h} \partial_h w + u \cdot \nabla w_h  ) \ dx \ ds
= \int_0^t  \rho_{\mathcal{F}}  \dot{h} \int_{\Omega_h}  \partial_h w_h \cdot u \ dx \ ds
+ 
  \int_0^t  \rho_{\mathcal{F}} \int_{\Omega_h}    u \cdot \nabla w_h   \cdot u  \ dx \ ds.
\end{equation}
To estimate the the first term in the right-hand side of \eqref{term2split}, we split $\Omega_h$ into the domains $\mathcal G_h$ and $\Omega_h\setminus \mathcal{G}_h$. 
%Since $w_h$  is uniformly bounded with its derivative outside $\mathcal G_h$ ( we have,
Combining \eqref{mt003}, \cref{propkorn} and \cref{L02} yields
\begin{align*}
\left| \rho_{\mathcal{F}}  \int_0^t \dot{h} \int_{\Omega_h \setminus \mathcal G_h} u \cdot   \partial_h w  \ dx \ ds\right| 
    & \leq C_w \rho_{\mathcal{F}}  \int_0^t |\dot{h}| \left( \int_{\Omega_h}  |u|^2 \ dx\right)^{\frac 12}  
    \leq C_w \rho_{\mathcal{F}} \int_0^t \int_{\Omega_h} |D(u)|^2 \ dx \ ds\\
   & \leq C_w \rho_{\mathcal{F}} E^0.     
\end{align*}
In the domain $\mathcal G_h,$ we apply \eqref{maispourquoionumerotelesreferences1?}, \eqref{tre7.1}
and \eqref{tre8.1}
\begin{multline*}
\left| \int_{\mathcal G_h}  \rho_{\mathcal{F}} u \cdot  (\dot{h} \partial_h w )  \ dx\right|
\leq 
\rho_{\mathcal{F}} |\dot{h}|  \left[ 
	\left| \int_{\mathcal G_h}   u_1  \partial_{2} \left(\int_0^{x_2} \partial_h w_{h,1} \ dz\right) \ dx  \right| 
	+ \left| \int_{\mathcal G_h}  u_2  \partial_h w_{h,2} \ dx\right|   
\right] 
\\
\leq \rho_{\mathcal{F}} |\dot{h}|  
\left[ 
	\left| \int_{\mathcal G_h}  \partial_2 u_1  \left(\int_{0}^{x_2} \partial_h w_{h,1} \ dz\right) \ dx   \right| 
	+\|u\|_{L^2(\partial B_h)} \left[ \int_{-1/2}^{1/2}  \left(\int_{0}^{H} \partial_h w_{h,1} \ dz\right)^2 \ dx_1   \right]^{1/2} 
	\right.\\ 
	+ \left| \int_{\mathcal G_h}  u_2  \partial_h w_{h,2} \ dx\right|   
\Bigg] 
 \leq C_w \rho_{\mathcal{F}}  |\dot{h}|  \left[ \|\nabla u_1\|_{L^2(\Omega_h)} + \|u\|_{L^2(\partial B_h)} |h|^{-1/4}+ \|u_2\|_{L^2(\Omega_h)} |h|^{-1/4}\right].
\end{multline*}
Consequently, applying \cref{etcacontinueencoreetencore},  \cref{propkorn} and \eqref{nrj}, we conclude that
$$
\left| \int_0^t \int_{\mathcal G_h}  \rho_{\mathcal{F}} u \cdot  (\dot{h} \partial_h w ) \ dx \ ds \right| 
 \leq  C_w  \rho_{\mathcal{F}}  \int_0^t \int_{\Omega_h} |D(u)|^2 \ dx \ ds \leq C_{w} \rho_{\mathcal{F}} E^0.
$$
Outside the gap, we refer to Proposition \ref{propkorn} and \cref{etcacontinueencoreetencore},  \cref{propkorn} and \eqref{nrj} to yield again that:
$$
\left| \int_0^t \int_{\Omega_h \setminus \mathcal G_h}  \rho_{\mathcal{F}} u \cdot  (\dot{h} \partial_h w ) \ dx \ ds \right| 
 \leq  C_w  \rho_{\mathcal{F}}  \int_0^t \int_{\Omega_h} |D(u)|^2 \ dx \ ds \leq C_{w} \rho_{\mathcal{F}} E^0.
$$
We conclude that:
\begin{equation}\label{term2a}
\left| \int_0^t \int_{\Omega_h}  \rho_{\mathcal{F}} u \cdot  (\dot{h} \partial_h w ) \ dx \ ds \right|  \leq C_{w} \rho_{\mathcal{F}} E^0.
\end{equation}

\medskip 

As for the other term  in the right-hand side of \eqref{term2split}, we have again:
\[
 \rho_{\mathcal{F}} \int_{\Omega_h}    (u \cdot \nabla) w_h   \cdot u  \ dx  =
 \rho_{\mathcal{F}} \int_{\mathcal G_h}  (u \cdot \nabla) w_h   \cdot u  \ dx 
  + \rho_{\mathcal{F}} \int_{\Omega_h \setminus \mathcal G_h} (u \cdot \nabla) w_h   \cdot u\ dx.
\]
Using \eqref{mt003} and  \cref{propkorn} entails:
%\[
%\left| \int_0^t \left[\rho_{\mathcal{F}}  \int_{\mathcal G_h} u \cdot \nabla w_h   \cdot u   (1-\chi) \ dx
% + \rho_{\mathcal{F}} \int_{\Omega_h \setminus \mathcal G_h} u \cdot \nabla w_h   \cdot u\ dx\right]\right| \leq C_w \rho_{\mathcal{F}} E^0.
%\]
\[
\left| \int_0^t  \rho_{\mathcal{F}} \int_{\Omega_h \setminus \mathcal G_h} u \cdot \nabla w_h   \cdot u\ dx \ ds\right| \leq C_w \rho_{\mathcal{F}} E^0.
\]
For the other integral, we  integrate by parts:
 \begin{multline*}
 \int_{\mathcal G_h}  u \cdot \nabla w_h \cdot u \ dx 
= \int_{-1/4}^{1/4} (u\otimes u)(x_1,H(x_1)) : \left[\int_0^{H(x_1)} \nabla w_h \ dz\right] {\rm d}x_1 
\\	-  \int_{\mathcal G_h} \partial_2 (u\otimes u) : \left(\int_{0}^{x_2} \nabla w_h \ dz\right) \ dx
\end{multline*}
and we apply here \eqref{maispourquoionumerotelesreferences1?},
\eqref{tre7.1} and  \cref{propkorn} to obtain
 \begin{align*}
\rho_{\mathcal{F}}   \left| \int_0^t \int_{\mathcal G_h}  u \cdot \nabla w_h \cdot u \ dx \ ds\right|
 &  
%\leq  C_w \rho_{\mathcal{F}} \int_0^t \left|  \int_{\partial B_h} |u|^2{\rm d}\sigma  +   \int_{\mathcal G_h}  |u| |\nabla u| \right| 
\leq C_w \rho_{\mathcal{F}} \int_0^t \int_{\Omega_h} |D(u)|^2 \ dx \ ds\\
& \leq C_w \rho_{\mathcal{F}} E^0.  
\end{align*}
We have finally obtained that:
\begin{equation} \label{term3}
\left| \int_0^t \int_{\Omega_h} \rho_{\mathcal{F}} u \cdot  (\dot{h} \partial_h w + u \cdot \nabla w_h  ) \ dx \right|
\leq C_w \rho_{\mathcal{F}} E^0.  
\end{equation}

\medskip

We proceed with the term on the third line of \eqref{new1.0}.
Using \eqref{tre7.0} and \eqref{mt003}, we have:
\begin{align} \notag 
\left| \int_0^t \int_{\Omega_h} u\cdot (-\Delta w_h + \nabla q_h) \ dx \ ds  \right| & 
\leq \left(\int_0^t \|u\|_{L^2(\Omega_h)} \ ds\right) \|-\Delta w_h + \nabla q_h\|_{L^\infty(L^2(\Omega_h))}
\\
& \leq C_{w} \sqrt{t}\|Du\|_{L^2(L^2(\Omega_h))} \leq C_{w}\sqrt{tE^0} \leq C_w E^0 +  t. \label{term4}
\end{align}
Concerning the term on the fourth line of \eqref{new1.0}, from \eqref{tre7.1}, Korn inequality and the energy inequality \eqref{nrj},
we obtain:
\begin{equation}\label{term5}
\left| \int_0^t \int_{\partial \Omega_h} (u-\lambda w^*)\cdot \tau \  (D(w_h)n\cdot \tau) \ d\gamma \ ds  \right| \leq
C_w\int_0^t \int_{\partial \Omega_h} \left| u-\lambda w^* \right| \ d\gamma \ ds  \leq C_w E^0.
\end{equation}
As for the term on the fifth line, recalling \eqref{tre7.2}, we have also:
\begin{equation} \label{nomalacon}
 \int_0^t \int_{\partial \Omega_h}  |[w_h- w^*]\cdot \tau| \ d\sigma \ ds \leq C_w \int_0^t |\ln(h(s))| {\rm d}s.
\end{equation}

\medskip

Combining \eqref{term1}, \eqref{term3}, \eqref{term4}, \eqref{term5}, \eqref{nomalacon} into \eqref{new1.0}, we obtain:
\begin{equation}\label{ode00}
\dot h(t) %+\Phi(h)
\leq  \left( -g + \frac{\rho_{\mathcal{F}}\pi+1}{m}\right){t}+ \frac{C_0}{m}+ \frac{C^*}{m}\int_0^t |\ln(h(s))| \ d s,
\end{equation}
with  $C^* = C_w$ and 
\begin{align*}
C_0 & =C_w(  1 + (\rho_{\mathcal{F}} + 1) E^0) + 2 \max_{(0,1)} |\Phi(h)|
\\
& = C_w\left( 1 + (\rho_{\mathcal{F}} + 1)\left[  \dfrac{\rho_{\mathcal{F}}}{2}\int_{\Omega_{h^0}} |u^0|^2 \ dx + (m-\pi \rho_{\mathcal{F}}) g h^0\right]\right) + 2 \max_{(0,1)} |\Phi(h)|
\end{align*}
Here, we note that $C^*$ is  independent of $m,h^0,$ while choosing
$mh^0 = C_1$ sufficiently large (recall that $\pi \rho_{\mathcal F}/m< 1$), depending on $\rho_{\mathcal F},$ $g$ and $\|u^0\|_{L^2(\Omega_{h^0})},$ we might bound 
\begin{align*}
\dfrac{C_0}{m} & = \left[ \dfrac{C_w}{C_1} \left( 1 + (\rho_{\mathcal{F}} + 1)\left[  \dfrac{\rho_{\mathcal{F}}}{2}\int_{\Omega_{h^0}} |u^0|^2 \ dx \right]\right)    + \dfrac{2 }{C_1}\max_{(0,1)} |\Phi(h)|\right] h^0  + \left(1-\dfrac{\pi \rho_{\mathcal{F}}}{m}\right) g h^0 \\
& \leq 4 gh^0.
\end{align*}
We obtain the expected result with $C^{\sharp}=4.$
This ends the proof of Lemma \ref{lem_ode}.

\bibliographystyle{plain}
\bibliography{reference}

\end{document}